\documentclass[12pt]{article}
\setlength{\oddsidemargin}{27mm}
\setlength{\evensidemargin}{27mm}
\setlength{\hoffset}{-1in}

\setlength{\topmargin}{27mm}
\setlength{\voffset}{-1in}
\setlength{\headheight}{0pt}
\setlength{\headsep}{0pt}

\setlength{\textheight}{235mm}
\setlength{\textwidth}{155mm}
\usepackage{amsfonts, amssymb, amsmath}
\usepackage{amsthm}
\usepackage[all]{xy}
\usepackage{url}
\pagestyle{plain}

\baselineskip 12pt

\newtheorem{theorem}{Theorem}[section]
\newtheorem{lemma}{Lemma}[section]

\newcommand{\E}{\textbf{E}}
\newcommand{\prob}{\textbf{P}}
\newcommand{\lcm}{\textrm{lcm}}

\newcommand{\N}{\mathbb{N}}

\newcommand{\dt}{\delta}

\begin{document}
\begin{center}
\textbf{\Large On the distribution of the lcm of $k$-tuples and related problems} \\
\vspace{5mm}
\begin{tabular}{c} Sungjin Kim\\[1.2\baselineskip]
\multicolumn{1}{c}{Santa Monica College, California State University Northridge} \\
 \verb+sungjin.kim@csun.edu+
\end{tabular} \end{center}

\begin{abstract}
We study the distribution of the least common multiple of positive integers in $\N\cap [1,x]$ and related problems. We refine some results of Hilberdink and T\'{o}th (2016). We also give a partial result toward a conjecture of Hilberdink, Luca, and T\'{o}th (2020).
\end{abstract}

\section{Introduction}
    Let $\N=\{1,2,3,\ldots\}$ be the set of positive integers and $\mathcal{P}=\{2,3,5,\ldots\}$ be the set of prime numbers. Let $(n_1,\ldots,n_k)=\gcd(n_1,\ldots, n_k)$ and $[n_1,\ldots, n_k]=\lcm(n_1,\ldots,n_k)$ be the greatest common divisor (gcd) and the least common multiple (lcm) of $k$-tuple of positive integers $n_1, \ldots, n_k$ respectively. A well known result is that the probability of $(n_1,n_2)=1$ for randomly chosen positive integers $n_1\leq x$ and $n_2\leq x$ tends to $1/\zeta(2)=6/\pi^2$ as $x$ tends to $\infty$. This can be made more precise by providing an asymptotic formula with an error term. Further, it is possible to write the probability distribution of the gcd of $k$-tuples of positive integers for $k\geq 2$. This is studied in~\cite[Theorem A']{FF1}. For any integer $x\geq 2$, let $(X_i^{(x)})$ be sequence of independent discrete uniform distribution (iid) on the set $\{1,\ldots, x\}$. For $1\leq m\leq x$ and $k\geq 3$,
    $$
    \prob(\gcd(X_1^{(x)}, \ldots, X_k^{(x)})=m)=\frac1{m^k\zeta(k)}+O\left(\frac1{xm^{k-1}}\right).
    $$

    We are interested in achieving a similar result for the distribution of the lcm of tuples. This also has been extensively studied in the literature. P. Diaconis and P. Erd\H{o}s~\cite[Theorem 1]{DE} provided a distribution function with an error term in the case $k=2$. For any $0\leq t\leq 1$,
    $$
    \prob(\lcm(X_1^{(x)}, X_2^{(x)})\leq tx^2)=1-\frac1{\zeta(2)}\sum_{j=1}^{\lfloor 1/t\rfloor}\frac{1-jt(1-\log(jt))}{j^2}+O_t\left(\frac{\log x}x\right).
    $$

    For $k\geq 3$, the distributions of lcm are more intricate. The case $k=3$ is handled by J. Fern\'{a}ndez, P. Fern\'{a}ndez~\cite[Theorem 3]{FF1}. For $0\leq t \leq 1$,
    $$
    \lim_{x\rightarrow\infty} \prob(\lcm(X_1^{(x)}, X_2^{(x)}, X_3^{(x)})\leq tx^3)=1-T_3\sum_{j=1}^{\infty} \frac1{j^3}\sum_{m=1}^{\infty} \frac{\Upsilon_3(m)3^{\omega(m)}}{m^2}(1-\Omega_3(tj^2m)),
    $$
    where $T_3=\prod_p \left(1-\frac1p\right)^2\left(1+\frac2p\right)$ is the probability that three random positive integers are pairwise coprime, $\Upsilon_3(m)=\prod_{p|m} \frac{1+1/p}{1+2/p}$, $\omega(m)$ is the number of distinct prime divisors of $m$, and $\Omega_k(s)$, $0<s\leq 1$ is the volume of $\{(x_1,\ldots,x_k)\in [0,1]^k : x_1\cdots x_k\leq s\}$. Note that this result does not provide an error term in the asymptotic formula.

    For $k>3$, J. Fern\'{a}ndez, P. Fern\'{a}ndez~\cite[Theorem 1]{FF1} proved that
    $$
    \prob(\lcm(X_1^{(x)},\ldots, X_k^{(x)})>tx^k)\asymp \sum_{j=1}^{\infty} \frac{1-\Omega_k(tj^{k-1})}{j^k}
    $$

    T. Hilberdink, L. T\'{o}th~\cite[Corollary 1]{HT} computed the moments of the distributions of lcm. Let $k\geq 3$ and $r>-1$. Then for every $\epsilon>0$,
    $$
    \E\left(\frac{[X_1^{(x)}, \ldots, X_k^{(x)}]^r}{x^{kr}}\right)=\frac1{x^k} \sum_{n_1,\ldots, n_k\leq x} \frac{[n_1,\ldots, n_k]^r}{x^{kr}}=\frac{C_{r,k}}{(r+1)^k}+O\left( x^{-\frac12 \mathrm{min}(r+1,1)+\epsilon}\right).
    $$
    This result combined with the method of moments~\cite[Lemma 3]{D}, prove that there is a limiting distribution of $\frac{[X_1^{(x)}, \ldots, X_k^{(x)}]}{x^k}$ as $x\rightarrow\infty$. However, an expression of the limiting distribution using the method of moments is quite complicated and it conveys no arithmetical information.

    A. Bostan, A. Marynych, K. Raschel~\cite[Theorem 2.3]{BMR} used the probabilistic method to find the limiting distribution of lcm. They proved that for $k\geq 2$,
    $$
    \frac{[X_1^{(x)}, \ldots, X_k^{(x)}]}{x^{k}}
    $$
    converges in distribution to
    $$
    \prod_{j=1}^k U_j \prod_{p\in\mathcal{P}} p^{\max\limits_{j\leq k} \mathcal{G}_j(p)-\sum\limits_{j\leq k} \mathcal{G}_j(p)}
    $$
    where $(U_j)$ is a sequence uniform distribution on $(0,1)$ and for each prime $p$, $(\mathcal{G}_j(p))$ is a sequence of random variables such that $\mathcal{G}_j(p)$ is has a geometric distribution with
    $$
    \mathcal{G}_j(p)=m \ \textrm{ with probability }\left(1-\frac1p\right)\frac1{p^m} \ \textrm{ for $m\geq 0$. }
    $$
    All distributions here are independent.

    We write
    $$
    R_k:= \prod_{p\in\mathcal{P}}p^{\max\limits_{j\leq k} \mathcal{G}_j(p)-\sum\limits_{j\leq k} \mathcal{G}_j(p)}\in 1/\N.
    $$
    Then we must have for any $0<t \leq 1$,
    \begin{equation}
    \prob\left(\frac{[X_1^{(x)}, \ldots, X_k^{(x)}]}{x^{k}}>t\right)\rightarrow \sum_{n\leq \frac1t} \int_{nt}^1 \frac{(-\log x)^{k-1}}{(k-1)!}dx \ \prob\left(R_k=\frac1n\right) \ \textrm{as $x\rightarrow\infty$.}
    \end{equation}\label{1}
    To see this, we have for each $n\in\N$, $\int_{nt}^1 (-\log x)^{k-1}/(k-1)! \ dx$ is the conditional probability that $R_k\prod_{j\leq k} U_j > t$ given that $R_k=1/n$. Note that the integral is $1-\Omega_k(nt)$ in the notation of  J. Fern\'{a}ndez and  P. Fern\'{a}ndez~\cite{FF1}. This conditional probability is $0$ if $n>1/t$. This is the reason that the sum ranges up to $n\leq 1/t$. A detailed account of the probability of $R_k=1/n$ will be described in section 3. We write $p_k(n)=\prob(R_k=1/n)$. Note that $p_2(n)=1/(n^2\zeta(2))$. We remark that this expression in (1) agrees with $r=3$ result of~\cite{FF1}. To see this, we have for any positive integer $n$,
    $$
    p_3(n)=T_3\sum_{j^2m=n}\frac{\Upsilon_3(m)3^{\omega(m)}}{j^3m^2}.
    $$

    This paper brings improvements to the current literature in the following aspects.\\

    1. Providing an error term to (\ref{1}) in accordance with~\cite[Theorem 1]{DE}.

    2. Improving the error terms of the moments of $\frac{[X_1^{(x)}, \ldots, X_k^{(x)}]}{x^{k}}$ and $\frac{[X_1^{(x)}, \ldots, X_k^{(x)}]}{X_1^{(x)}\cdots X_k^{(x)}}$ from~\cite[Corollary 1]{HT}.

    3. Improving~\cite[Theorem 4.1]{HLT} and thereby giving a partial result toward~\cite[Remark 4.2]{HLT}. \\

    All three improvements are based on the probability that $k$ positive integers $n_1,\ldots, n_k$ are pairwise relatively prime (for any $i\neq j$, $(n_i,n_j)=1$). L. T\'{o}th~\cite{T} proved that such probability tends to
    $$
    T_k=\prod_{p\in\mathcal{P}} \left(1-\frac1p\right)^{k-1}\left(1+\frac{k-1}p\right).
    $$
    An error term of $O(x^{k-1} \log^{k-1} x)$ is provided in his asymptotic formula for the counting function of such $k$ tuples. His result was successively extended by J. Hu in~\cite{H1} and~\cite{H2}. See also~\cite{RH} for an improved error term $O(x^{k-1}\log^d x)$ where $d$ is the maximum degree of the vertices of a given graph $G$. However, this improvement does not affect the error terms of our problems. We apply a modified version of~\cite[Theorem 1]{H2} which counts the number of $k$-tuples of pairwise coprime positive integers up to $x$ such that the $i$-th component is coprime to $u_i$. See section 2 for details.

    On the first aspect, we provide an error term to (\ref{1}) in accordance with~\cite[Theorem 1]{DE}. The following theorem gives a quantitative description of the distribution function of lcm as mentioned in~\cite[Page 26]{FF3}. We prove this in section 5. Our method is completely elementary.
    \begin{theorem}
    Let $k\geq 2$ be an integer. For any $0<t\leq 1$, we have
    $$\prob\left(\frac{[X_1^{(x)}, \ldots, X_k^{(x)}]}{x^{k}}>t\right)=\sum_{n\leq \frac1t} \int_{nt}^1 \frac{(-\log z)^{k-1}}{(k-1)!}dz \cdot \ p_k(n)+ O_t(x^{-1}\log^{k-1}x).$$
    \end{theorem}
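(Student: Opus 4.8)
The plan is to count lattice tuples directly and match the count, term by term, to the probabilistic model behind $R_k$. The starting point is the elementary identity, valid for every tuple $(n_1,\dots,n_k)$,
$$\frac{[n_1,\dots,n_k]}{n_1\cdots n_k}=\prod_{p\in\mathcal P}p^{\,\max_{j\le k}v_p(n_j)-\sum_{j\le k}v_p(n_j)}=\frac1{N(n_1,\dots,n_k)},$$
where $v_p$ denotes the $p$-adic valuation and $N(n_1,\dots,n_k)\in\N$; this is precisely the deterministic analogue of $R_k$, so that $N(\mathbf n)=n$ plays the role of $R_k=1/n$. Consequently $[n_1,\dots,n_k]>tx^k$ is equivalent to $n_1\cdots n_k>t\,N(\mathbf n)\,x^k$, and since $n_1\cdots n_k\le x^k$ this is possible only when $N(\mathbf n)<1/t$. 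I would therefore split
$$\prob\!\left(\frac{[X_1^{(x)},\dots,X_k^{(x)}]}{x^k}>t\right)=\sum_{n\le 1/t}\frac1{x^k}\,\#\bigl\{\mathbf n\in[1,x]^k:\ N(\mathbf n)=n,\ n_1\cdots n_k>tn\,x^k\bigr\},$$
and it suffices to prove that each summand equals $p_k(n)\,(1-\Omega_k(tn))+O_t(x^{-1}\log^{k-1}x)$, since $\int_{nt}^1(-\log z)^{k-1}/(k-1)!\,dz=1-\Omega_k(nt)$ as already noted in the introduction.

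For fixed $n$ I would separate shared from private primes. A prime dividing two or more of the $n_i$ forces at least two positive valuations, hence contributes a factor $p^{\ge1}$ to $N(\mathbf n)$; thus every shared prime divides $n$, and the shared part is supported on the finitely many primes dividing $n$. Writing $n_i=w_iu_i$, where $w_i$ is the part of $n_i$ supported on these shared primes and $u_i$ the complementary part, the tuple $(u_1,\dots,u_k)$ is pairwise coprime and coprime to $\prod_j w_j$, while $N(\mathbf n)=N(\mathbf w)$ depends only on $\mathbf w=(w_1,\dots,w_k)$. Grouping by the admissible overlap patterns $\mathbf w$ with $N(\mathbf w)=n$, the inner count becomes
$$\sum_{\mathbf w:\,N(\mathbf w)=n}\#\Bigl\{(u_1,\dots,u_k)\ \text{pairwise coprime}:\ u_i\le x/w_i,\ \bigl(u_i,\textstyle\prod_j w_j\bigr)=1,\ \prod_i u_i>\tfrac{tn\,x^k}{\prod_i w_i}\Bigr\}.$$
Here I invoke the modified version of Hu's theorem recorded in Section 2, which counts pairwise coprime $k$-tuples below prescribed bounds with the $i$-th entry restricted to be coprime to a fixed modulus, with error of size $O(x^{k-1}\log^{k-1}x)$.

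To realize the factor $1-\Omega_k(tn)$ I would count the pairwise coprime tuples inside the box $\{u_i\le x/w_i\}$ that also satisfy $\prod_i u_i>tn\,x^k/\prod_i w_i$, by feeding this region into the counting estimate: the leading term is the pairwise coprime density (the local factor $T_k$ corrected for the coprimality condition modulo $\prod_j w_j$) times the volume of the region. Under the substitution $v_i=u_iw_i/x\in[0,1]$ the constraint reads $\prod_i v_i>tn$, whose complementary volume is exactly $\Omega_k(tn)$, producing the factor $1-\Omega_k(tn)$ uniformly in $\mathbf w$. Summing the leading terms over all $\mathbf w$ with $N(\mathbf w)=n$ then reconstitutes $p_k(n)=\prob(R_k=1/n)$, because the density attached to a pattern $\mathbf w$, divided by $\prod_i w_i$, is precisely the product over primes of the local geometric-model probabilities computed in Section 3. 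This delivers the main term $p_k(n)(1-\Omega_k(tn))$.

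The step I expect to be the main obstacle is controlling the error after summing over the overlap patterns $\mathbf w$. Each application of the pairwise coprime count carries an error $O(x^{k-1}\log^{k-1}x)$, which is exactly the target size after division by $x^k$, so essentially nothing may be lost in aggregate. The difficulty is that, for fixed $n$, the exponents of the shared primes in $\mathbf w$ are unbounded: one coordinate $w_i$ can be arbitrarily large, making the boxes $u_i\le x/w_i$ highly lopsided (and empty once some $w_i>x$). I would handle this by using a form of the Section 2 estimate whose error depends on the boxes in a way that decays as the $w_i$ grow, truncating patterns with $\max_i w_i$ large by the trivial bound, and checking that the resulting weighted sum over the remaining polylogarithmically many patterns converges while keeping the total error at $O_t(x^{-1}\log^{k-1}x)$, the $t$-dependence entering only through the range $n\le1/t$ and the volume $\Omega_k(tn)$.
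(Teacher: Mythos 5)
Your first reduction is exactly the one the paper uses: the event $[X_1^{(x)},\ldots,X_k^{(x)}]>tx^k$ is split according to the integer $n=X_1^{(x)}\cdots X_k^{(x)}/[X_1^{(x)},\ldots,X_k^{(x)}]$, only $n\le 1/t$ can occur, and the theorem reduces to showing that for each fixed $n$ the number of tuples with $N(\mathbf{n})=n$ and $n_1\cdots n_k>tnx^k$ equals $x^k\bigl(p_k(n)(1-\Omega_k(tn))\bigr)+O(x^{k-1}\log^{k-1}x)$; this per-$n$ statement is the paper's Lemma 4.2, equation (7). Your main-term bookkeeping would also come out right: the densities attached to your overlap patterns $\mathbf{w}$ form a convergent series summing to $p_k(n)$, consistent with the probabilistic model of Section 3.

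The genuine gap is in the error accounting for that per-$n$ count, and it is exactly the obstacle you flag in your last paragraph, which your proposed fix does not resolve. For fixed $n$, your patterns $\mathbf{w}$ with $N(\mathbf{w})=n$ and $\max_i w_i\le x$ number roughly $\prod_{p\mid n}(\log x/\log p)\asymp_n \log^{\omega(n)}x$, because at each prime the maximal exponent is unconstrained. The estimate you would need --- a pairwise-coprime count in boxes $\prod_i[1,x/w_i]$ whose error decays as the $w_i$ grow --- is not available: the error in the Section 2 count (Lemma 2.1, following Hu's induction) is $O(\theta(\mathbf{u})x^{k-1}\log^{k-1}x)$ uniformly in the box, and even a sharper boundary-type refinement of size $\max_i\prod_{j\neq i}(x/w_j)\cdot\log^{k-1}x$ does not tend to $0$ as the shortest side shrinks: for a pattern whose maximal exponent at $p$ is $M$, sitting in coordinate $i_0$, this is $\gg x^{k-1}\log^{k-1}x/\prod_{j\neq i_0}w_j$ \emph{uniformly in} $M$. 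Summing over the $\asymp\log x$ admissible values of $M$ at each prime leaves a total error $\gg_n x^{k-1}\log^{k-1+\omega(n)}x$, and the $t$-dependence of the implied constant cannot absorb extra powers of $\log x$; truncating at $\max_i w_i\le Y$ does not help either, since the trivial bound on the discarded tail forces $Y$ to be essentially $x$. This is precisely what the paper's tagged-vector device of Section 3 is designed to circumvent: for each $p^e\| n$ one fixes exactly the $k-1$ non-maximal exponents and imposes only a lower bound ($\uparrow$, ``or higher'') on the maximal one, so the unbounded maximal exponent is absorbed into the corresponding free variable, which is simply not required to be coprime to $p$. That collapses your infinite family of patterns into the finite set $B_n$ of $\tau_{k-1}(n)k^{\omega(n)}$ divisibility conditions, each handled by a single application of Lemma 2.1, with total error $O(\tau_{k-1}(n)(2k)^{\omega(n)}x^{-1}\log^{k-1}x)$, which then sums over $n\le 1/t$ to the claimed $O_t(x^{-1}\log^{k-1}x)$. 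Any repair of your plan essentially has to reinvent this grouping of patterns by the location, rather than the value, of the maximum.
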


    On the second aspect, we prove the following in section 6.
    \begin{theorem}
    Let $k\geq 2$ be an integer. For any $r>-1$, we have
    $$
    \E\left(\frac{[X_1^{(x)}, \ldots, X_k^{(x)}]^r}{x^{kr}}\right)=\frac1{x^k} \sum_{n_1,\ldots, n_k\leq x}\left( \frac{[n_1,\ldots, n_k]}{x^{k}}\right)^r=\frac{C_{r,k}}{(r+1)^k}+E_{r,k}(x)
    $$
    and
    $$
    \E\left(\frac{[X_1^{(x)}, \ldots, X_k^{(x)}]^r}{(X_1^{(x)}\cdots X_k^{(x)})^r}\right)=\frac1{x^k} \sum_{n_1,\ldots, n_k\leq x} \left(\frac{[n_1,\ldots, n_k]}{n_1\cdots n_k}\right)^r=C_{r,k}+E^*_{r,k}(x).
    $$
    Here, $C_{r,k}$ is the constant given in~\cite[Corollary 1]{HT}. We have an expression in a convergent Dirichlet series.
    $$
    C_{r,k}=\sum_{n=1}^{\infty} n^{-r}p_k(n), \ r>-1.
    $$
    The error terms $E_{r,k}(x)$ and $E^*_{r,k}(x)$ are both
    $$
    \begin{cases}O_r(x^{-\frac{r+1}2} \log^{\max(2^k-k-1, 2k^2-k-2)} x) \mbox{ if } -1<r\leq 1, \\
    O_r(x^{-1}\log^{k-1} x) \mbox{ if } r>1.\end{cases}
    $$
    \end{theorem}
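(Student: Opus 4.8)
The plan is to reduce both moment sums to counts of pairwise coprime $k$-tuples carrying coprimality side-conditions, to which the modified version of~\cite[Theorem 1]{H2} recorded in Section 2 applies. The algebraic starting point is the identity $[n_1,\ldots,n_k]=(n_1\cdots n_k)/D$, where
$$D=D(n_1,\ldots,n_k)=\prod_{p}p^{\sum_{j}v_p(n_j)-\max_j v_p(n_j)}\in\N$$
records the total overlap of the tuple; note that only primes dividing at least two of the $n_j$ contribute to $D$. Accordingly I would factor each $n_j=\delta_j m_j$, where $\delta_j$ is supported on the shared primes (those dividing at least two components) and $m_j$ is the part of $n_j$ coprime to every other component. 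The $m_j$ are then pairwise coprime and coprime to $u:=\prod_{p\ \mathrm{shared}}p$, while the shared data $(\delta_1,\ldots,\delta_k)$ determines $D$ (written $(\delta_j)\to D$). Both sums can then be organized as a sum over $D$, an inner sum over the shared data producing $D$, and an innermost count of the coprime cores $m_j\leq x/\delta_j$.

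Under this parametrization the second sum reads
$$\frac1{x^k}\sum_{D\geq 1}D^{-r}\sum_{(\delta_j)\to D}\#\{(m_1,\ldots,m_k)\ \mathrm{pairwise\ coprime}:\ (m_j,u)=1,\ m_j\leq x/\delta_j\},$$
and the modified H2 estimate evaluates the innermost cardinality as a density times $\prod_j(x/\delta_j)$ plus an error that, writing $\delta$ for a typical $\delta_j$, is of order $(x/\delta)^{k-1}\log^{k-1}x$. For the first sum the identity $[n_1,\ldots,n_k]^r=\prod_j(\delta_j m_j)^r/D^r$ inserts the weight $\prod_j m_j^r$; summing $m_j^r$ over the coprime core by partial summation contributes a factor $(x/\delta_j)^{r+1}/(r+1)$ per variable, which is precisely the origin of the $(r+1)^{-k}$ in the main term. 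I would then verify prime by prime that the reassembled main terms equal $\sum_D D^{-r}p_k(D)$ and $(r+1)^{-k}\sum_D D^{-r}p_k(D)$: since the limiting local law of $v_p$ of a uniformly random integer is the geometric law $(1-1/p)p^{-m}$ defining $\mathcal{G}_j(p)$, summing the H2 density over shared valuation patterns with $\sum_j v_p(n_j)-\max_j v_p(n_j)=v_p(D)$ recovers the local factor of $p_k$, identifying $p_k(D)=\sum_{(\delta_j)\to D}(\mathrm{density})/\prod_j\delta_j$. Convergence of $C_{r,k}=\sum_n n^{-r}p_k(n)$ for $r>-1$ follows because each Euler factor of $\E[R_k^{-r}]=\prod_p\E[p^{-r(\sum_j\mathcal{G}_j(p)-\max_j\mathcal{G}_j(p))}]$ is $1+O(p^{-2-r})$, the event $\sum_j\mathcal{G}_j(p)-\max_j\mathcal{G}_j(p)\geq 1$ having probability $\asymp p^{-2}$.

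The heart of the argument, and the main obstacle, is the uniform control of the accumulated error. The innermost error $O((x/\delta)^{k-1}\log^{k-1}x)$, divided by $x^k$ and summed against $D^{-r}$ over all admissible shared data, fails to converge absolutely when $-1<r\leq 1$, so I would split the $D$-summation at a threshold $y$: on the range $D\leq y$ I use the H2 main term with its error, and on $D>y$ I discard the asymptotics and bound the contribution directly by the convergent tail $\sum_{D>y}D^{-r}p_k(D)\ll_\epsilon y^{-(r+1)+\epsilon}$ together with a trivial estimate for the surviving tuples. Optimizing $y$ to balance the small-$D$ error against this tail yields $O_r(x^{-(r+1)/2}\log^{\max(2^k-k-1,2k^2-k-2)}x)$ in the range $-1<r\leq 1$; for $r>1$ the error series converges absolutely and the total error is $O(x^{-1}\log^{k-1}x)$, matching T\'{o}th's exponent for the pairwise coprime count ($D=1$) after normalization. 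The logarithmic exponent emerges from this bookkeeping: the $2^k-k-1$ counts the overlap patterns, i.e. the subsets of $\{1,\ldots,k\}$ of size at least two over which shared primes may be distributed, while $2k^2-k-2$ reflects the heavier accumulation in the weighted $\prod_j m_j^r$ sums together with the side moduli $u$ in the H2 error. Keeping the H2 error uniform in these moduli as $(\delta_j)$ ranges below the cutoff is the delicate point on which the final exponent depends, and it is where I expect the bulk of the work to lie.
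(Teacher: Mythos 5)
Your proposal follows essentially the same route as the paper's own proof: parametrize tuples by the overlap $D$ (the paper's $n$ defined via $[n_1,\ldots,n_k]/(n_1\cdots n_k)=1/n$), evaluate the pairwise-coprime cores by the weighted version of the H2 count (the paper's Lemma 2.1), identify the reassembled main terms with the Dirichlet series $\sum_D D^{-r}p_k(D)$ and $(r+1)^{-k}\sum_D D^{-r}p_k(D)$, and split the $D$-sum at a threshold (the paper takes $\sqrt{x}$ for $-1<r\le 1$ and $x$ for $r>1$) with a drop-the-coprimality trivial bound on the tail (the paper's Lemmas 4.4 and 4.5). The two points your sketch leaves implicit, and where the paper does genuine work, are the Selberg--Delange/Tauberian analysis of the Dirichlet series of the trivial bound near $s=-1$ (via the identity giving the exponent $2^k-k-1$), which is what upgrades your $\epsilon$-loss tail estimates $\ll_\epsilon y^{-(r+1)+\epsilon}$ to the stated clean power $\log^{2^k-k-1}x$, and an extra error term in the weighted count when $-1<r<0$ arising from the constant $\zeta(-r)$ in $\sum_{m\le N}m^r=\frac{N^{r+1}}{r+1}+\zeta(-r)+O(N^r)$, which the paper shows contributes only $O(x^{-(r+1)+\epsilon})$ and is therefore absorbable.
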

    A significance of Theorem 1.2 lies in realizing that the constant $C_{r,k}$ gives a function of a single variable $r$ defined as a Dirichlet series. However, the series defining $C_{r,k}$ diverges at $r=-1$. Thus, the case $r=-1$ should be dealt with special care. We treat this case in section 7.

    On the third aspect, we prove the following in section 7.
    \begin{theorem}
    Let $k\geq 2$ be an integer. We have
    $$
    \log^{2^k-k-1} x\ll \E\left(\frac{X_1^{(x)}\cdots X_k^{(x)}}{[X_1^{(x)},\ldots, X_k^{(x)}]}\right)\ll \log^{2^k-k-1}x.
    $$
    \end{theorem}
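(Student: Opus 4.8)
The plan is to write the expectation as $\E(\cdots)=x^{-k}S(x)$, where $S(x)=\sum_{n_1,\ldots,n_k\le x}g(n_1,\ldots,n_k)$ and $g(n_1,\ldots,n_k)=\frac{n_1\cdots n_k}{[n_1,\ldots,n_k]}$. This is exactly the endpoint $r=-1$ of the second identity of Theorem 1.2, at which the Dirichlet series $C_{r,k}=\sum_n n^{-r}p_k(n)$ diverges; the growth of $S(x)/x^k$ is governed by the rate of this divergence. The starting point is that $g$ is multiplicative across primes, with $g(n_1,\ldots,n_k)=\prod_{p}p^{\sum_j v_p(n_j)-\max_j v_p(n_j)}$, where $v_p$ is the $p$-adic valuation.

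First I would extract the main term by the coordinatewise hyperbola method. Writing $g=h\ast\mathbf 1$ for the $k$-dimensional Dirichlet convolution, so that $g(\vec n)=\sum_{d_1\mid n_1,\ldots,d_k\mid n_k}h(\vec d)$, one has $S(x)=\sum_{\vec d}h(\vec d)\prod_{j}\lfloor x/d_j\rfloor$, with main term $x^k\sum_{d_1,\ldots,d_k\le x}h(\vec d)/(d_1\cdots d_k)$. A direct local computation shows $h$ is multiplicative and that $h_p$ vanishes on every pattern supported on a single coordinate (the single-variable generating function collapses to $1$); the dominant patterns are the squarefree ones in which a subset $S\subseteq\{1,\ldots,k\}$ with $|S|\ge 2$ carries the prime to the first power on exactly the coordinates in $S$, where $h_p$ has leading size $p^{|S|-1}$. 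Equivalently, the Dirichlet series $\sum_{\vec d}h(\vec d)(d_1\cdots d_k)^{-s}$ has Euler factor $1+(2^k-k-1)p^{-1}+O(p^{-2})$ at $s=1$, the constant $2^k-k-1$ being the number of subsets of $\{1,\ldots,k\}$ of size at least $2$. (Indeed, the divergent product $C_{-1,k}=\prod_p(1-p^{-1})^k\sum_{a_1,\ldots,a_k\ge 0}p^{-\max_j a_j}$ has local factors $1+(2^k-k-1)p^{-1}+O(p^{-2})$, which is the source of the exponent.)

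Next I would prove the two-sided bound $\sum_{d_1,\ldots,d_k\le x}h(\vec d)/(d_1\cdots d_k)\asymp\log^{2^k-k-1}x$. The reduction to coprimality counting is the key: every $\vec d$ with $h(\vec d)\neq 0$ factors, up to negligible contributions, as $d_j=\prod_{S\ni j}e_S$, where the integers $e_S$, indexed by the subsets $S$ with $|S|\ge 2$, are pairwise coprime and squarefree. Summing over these configurations turns the main sum into a sum of pairwise-coprime counts to which the modified coprimality estimate of Section 2 applies, and partial summation over each $e_S$ produces one factor of $\log x$ per subset $S$, hence the exponent $2^k-k-1$. For the upper bound one bounds $h$ crudely and checks that patterns outside the squarefree family contribute strictly lower powers of $\log x$; for the lower bound one restricts to genuinely squarefree configurations and uses the coprimality count to guarantee that the free (coprime) parts of the $n_i$ contribute the full factor $x^k$ without collapsing the log power.

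The main obstacle is the lower bound together with the matching of powers: one must rule out cancellation that would lower the exponent below $2^k-k-1$, and this is precisely where the quantitative pairwise-coprime count is needed, both to evaluate the constrained sums over the $e_S$ and to ensure the coprime free parts of the $n_i$ fill out $[1,x]$ with positive density. A secondary technical point is controlling the error $\sum_{\vec d}h(\vec d)\big(\prod_j\lfloor x/d_j\rfloor-\prod_j x/d_j\big)$ coming from the $k$-dimensional box rather than a single hyperbola; since $h$ is supported on $\vec d$ with rapidly decaying tails this error is of strictly smaller order, but making the estimate uniform across all $2^k-k-1$ subset-types requires the coprimality bounds with the auxiliary moduli $u_i$ from Section 2.
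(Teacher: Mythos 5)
Your route is genuinely different from the paper's. You attack $S(x)=\sum_{n_1,\ldots,n_k\le x} n_1\cdots n_k/[n_1,\ldots,n_k]$ head-on via the $k$-dimensional convolution $g=h\ast\mathbf{1}$ and the Euler-product structure of $h$; this is essentially the strategy of Hilberdink--Luca--T\'oth (and, in analytic form, of Essouabri--Salinas Zavala--T\'oth), whereas the paper fibers the expectation over the value of the ratio, writing $\E(\cdots)=\sum_{n\le x^{k-1}} n P_x^{(1)}(n)$, and then needs only three one-variable facts: the uniform asymptotic $P_x^{(1)}(n)=p_k(n)+O(\tau_{k-1}(n)(2k)^{\omega(n)}x^{-1}\log^{k-1}x)$ of Lemma 4.1 (used for $n\le x^{1/2-\dt}$ to get the lower bound), the uniform majorant $P_x^{(1)}(n)\le g_k(n)$ of Lemma 4.4 (used over the whole range $n\le x^{k-1}$ for the upper bound), and the Selberg--Delange asymptotics $\sum_{n\le y}np_k(n)\sim c_k\log^{2^k-k-1}y$ and $\sum_{n\le y}ng_k(n)\sim d_k\log^{2^k-k-1}y$ of Lemma 4.3. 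Your identification of $2^k-k-1$ as the number of subsets of $\{1,\ldots,k\}$ of size at least $2$, and the local factor $1+(2^k-k-1)p^{-1}+O(p^{-2})$, are correct and consistent with the paper's combinatorial identity (8).

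However, as written the proposal has genuine gaps, and they sit exactly where the elementary multivariable route historically stalled (HLT obtained only $x^k\ll V_k(x)\ll x^k\log^{2^k-2}x$ this way; the matching exponent was first reached by EST with multivariable complex-analytic tools). First, everything in your argument that discards terms or constraints --- restricting to squarefree configurations for the lower bound, majorizing the box sum for the upper bound --- tacitly requires nonnegativity of $h$ (or of its grouped contributions). You name ``ruling out cancellation'' as the main obstacle, but you supply no mechanism for it; this nonnegativity is a real, nontrivial property of $h$ (e.g.\ $h_p(\mathbf{1}_S)=\frac1p(p-1)\left((p-1)^{|S|-1}+(-1)^{|S|}\right)$ on squarefree patterns, and $h_p$ is not identically zero off them: $h_p(2,2,0)=p^2-p$), and it is the decisive missing ingredient. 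Second, your justification of the floor error --- that ``$h$ is supported on $\vec d$ with rapidly decaying tails'' --- is false: $h$ is polynomially large, e.g.\ for $k=2$ it is supported on the diagonal with $h(d,d)=\phi(d)$, and the floor error is then $\asymp x^2$ against a main term $\asymp x^2\log x$, i.e.\ smaller by exactly one power of $\log x$. In general there is no room for any crude bound here; the clean fix is again nonnegativity, since $\lfloor y\rfloor\le y$ then disposes of the upper bound entirely, and the lower bound follows by restricting all $e_S$ below a small power of $x$. Third, ``one factor of $\log x$ per subset $S$'' oversimplifies: the $e_S$ are coupled by the $k$ joint conditions $\prod_{S\ni j}e_S\le x$, so the constrained sum produces a polytope-volume constant times $\log^{2^k-k-1}x$, and the upper bound additionally requires the non-squarefree patterns to be confined to the same log exponent --- precisely the check where HLT's elementary argument lost $k-1$ logarithms. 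The strategy can be completed (EST prove more), but each of these three points needs a substantive argument that the proposal does not contain, whereas the paper's fibered decomposition reduces all of them to one-variable Dirichlet series plus the pairwise-coprimality count of Lemma 2.1.
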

    T. Hilberdink, F. Luca, L. T\'{o}th~\cite[Theorem 4.1]{HLT} proved that
    $$
    x^k\ll V_k(x):=\sum_{n_1, \ldots, n_k\leq x} \frac{n_1\cdots n_k}{[n_1,\ldots, n_k]} \ll x^k\log^{2^k -2} x.
    $$
    They also conjectured in~\cite[Remark 4.2]{HLT} that there is a positive constant $\lambda_k$ such that $V_k(x)\sim \lambda_k x^k \log^{2^k-k-1}x$ as $x\rightarrow\infty$. Our theorem shows that
    $$
    V_k(x)\asymp x^k \log^{2^k-k-1}x.
    $$
    Thus, we obtain the same order of magnitude as the conjecture.

    After the author's initial submission of this paper,  D. Essouabri, C. Salinas Zavala, L. T\'{o}th~\cite{EST} provided a proof of~\cite[Remark 4.2]{HLT} with a full asymptotic formula. They used methods from complex analysis such as multivariable Perron's formula, whereas the methods of this paper are elementary. The author provided elementary proofs of the main terms and improved the error terms by a different version of multivariable Perron's formula in~\cite{K}.
    \section{Pairwise coprime $k$-tuples}
    Let $x\geq 2$ be an integer and $\mathbf{u}=(u_1,\ldots,u_k)\in\N^k\cap [1,\infty)$. We are interested in counting the number of $k$-tuples $(n_1,\ldots,n_k)\in \N^k\cap [1,x]^k$ such that $(n_i,n_j)=1$ whenever $i\neq j$, and $(u_i,n_i)=1$ for each $i$. These are ${\rm PC}_{\mathbf{u}}$ tuples in~\cite{FF1}. J. Hu~\cite[Theorem 1]{H2} states that for any graph $G=(V,E)$ with $V=\{1,\ldots,k\}$, the number $Q_G^{\mathbf{u}}(x)$ of $k$-tuples $(n_1,\ldots, n_k)\in\N^k\cap [1,x]^k$ such that $(n_i,n_j)=1$ whenever $(i,j)\in E$, and $(u_i,n_i)=1$ for each $i$, satisfies
    $$
    Q_G^{\mathbf{u}}(x)=A_Gf_G(\mathbf{u})x^k+O(\theta(\mathbf{u})x^{k-1}\log^{k-1}x),
    $$
    where
    $$
    A_G=\prod_{p\in\mathcal{P}}\left(\sum_{m=0}^k i_m(G)\left(1-\frac1p\right)^{k-m}\frac1{p^m}\right),
    $$
    $$
    f_G(\mathbf{u})=\prod_{p|u_1u_2\cdots u_k}\left(1-\frac{\sum_{m=0}^k i_{m,S_{\mathbf{u},p}}(G)(p-1)^{k-m}}{\sum_{m=0}^k i_m(G)(p-1)^{k-m}}\right),
    $$
    and
    $$
    \theta(\mathbf{u})=\max_{i\leq k} 2^{\omega(u_i)}.
    $$
    Given a graph $G=(V,E)$ with $V=\{1,\ldots, k\}$, we say that a subset $S\subseteq V$ is {\it independent} if no two vertices of $S$ are connected by an edge in $G$. Here, $i_m(G)$ is the number of independent sets of cardinality $m$ and $i_{m,S}(G)$ is the number of independent sets of cardinality $m$ which contains a vertex in $S$. The set of indices $i$ with $1\leq i\leq k$ such that $d|u_i$ is denoted by $S_{\mathbf{u},d}$. We apply this theorem in case $G=K_k$ is the complete graph. We have $i_0(G)=1$, $i_1(G)=k$, and $i_j(G)=0$ if $j\geq 2$. Thus, we have $A_G=T_k$. If $p|u_1u_2\cdots u_k$, we have $i_{0,S_{\mathbf{u},p}}=0$, $i_{1,S_{\mathbf{u},p}}=\#\{1\leq i\leq k : p|u_i\}=|S_{\mathbf{u},p}|$, and $i_{j,S_{\mathbf{u},p}}=0$ if $j\geq 2$.
    $$
    Q_{K_k}^{\mathbf{u}}(x)=x^kT_k \prod_{p|u_1u_2\cdots u_k}\left(1-\frac{|S_{\mathbf{u},p}|}{p+k-1}\right)+O(\theta(\mathbf{u}) x^{k-1}\log^{k-1} x).
    $$
    Note that J. Fern\'{a}ndez, P. Fern\'{a}ndez~\cite[Theorem 3.1]{FF2} obtained the main term in case $\mathbf{u}$ is a pairwise coprime (PC) $k$-tuple.
    The inductive proof in~\cite{H2} readily generalizes to the counting of tuples in arbitrary cubes. Let $\mathbf{a}=(a_1,\ldots, a_k)\in \N^k$. Denote by $Q_{K_k}^{\mathbf{u}}(\mathbf{a}, x)$ the tuples in $\prod_i [1, x/a_i]$ satisfying $(n_i,n_j)=1$ whenever $(i,j)\in E$, and $(u_i,n_i)=1$ for each $i$. Then we have
    $$
    Q_{K_k}^{\mathbf{u}}(\mathbf{a}, x)=\frac{x^k}{a_1a_2\cdots a_k} T_k \prod_{p|u_1u_2\cdots u_k}\left(1-\frac{|S_{\mathbf{u},p}|}{p+k-1}\right)+O(\theta(\mathbf{u})x^{k-1}\log^{k-1}x).
    $$
    We would like to extend this asymptotic formula where each tuple is weighted by $(n_i/(x/a_i))^r$ with $r>-1$, also by the characteristic function of $(n_i/(x/a_i))_{i\leq k} \in \{(s_1,\ldots, s_k)\in [0,1]^k : s_1s_2\cdots s_k >t\}$. Let $f(\mathbf{s})=f(s_1,\ldots, s_k)$ be one of the following functions
    $$
    f(\mathbf{s})=(s_1\cdots s_k)^r, \ r>-1 \ \textrm{ or }
    $$
    $$
    f(\mathbf{s}) \ \textrm{ is the characteristic function of }\{\mathbf{s}\in [0,1]^k : s_1s_2\cdots s_k >t\}.
    $$
    \begin{lemma}
    Let $\mathbf{a}=(a_1,\ldots, a_k)\in\N^k$, $\mathbf{u}=(u_1,\ldots,u_k)\in\N^k$, and $\mathbf{s}=(s_1,\ldots, s_k)\in [0,1]^k$. Let $Q_{K_k,f}^{\mathbf{u}}(\mathbf{a},x)$ be the sum over pairwise coprime  $k$-tuples $(n_1,\ldots, n_k)$ in $\prod_{i\leq k}[1,x/a_i]$ such that $(u_i,n_i)=1$ for each $i$, and each tuple is weighted by $f((n_i/(x/a_i))_{i\leq k})$. Then for $r\geq 0$,
    \begin{equation}
    Q_{K_k,f}^{\mathbf{u}}(\mathbf{a},x)=\frac{x^kT_k}{a_1a_2\cdots a_k}\int_{[0,1]^k} f(\mathbf{s}) d\mathbf{s} \prod_{p|u_1u_2\cdots u_k}\left(1-\frac{|S_{\mathbf{u},p}|}{p+k-1}\right)+O(\theta(\mathbf{u})x^{k-1}\log^{k-1}x).
    \end{equation}\label{2}
    If $-1<r<0$, there is an additional error term
    $$
    O_r \left( x^{k-1+|r|}\left( \frac1{a_1^{|r|} a_2\cdots a_k}+ \frac1{a_1 a_2^{|r|}a_3\cdots a_k}+ \cdots + \frac1{a_1\cdots a_{k-1} a_k^{|r|}}\right)\right).
    $$
    \end{lemma}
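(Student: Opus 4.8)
The plan is to transfer the unweighted box count already recorded for $Q_{K_k}^{\mathbf u}(\mathbf a,x)$ to the weighted sum by an \emph{exact} layer‑cake representation of the weight, so that the main term is produced by the volume part of the box count and the error by the remainder. Throughout write $N_i=x/a_i$, put
$$C_{\mathbf u}=T_k\prod_{p\mid u_1\cdots u_k}\Big(1-\frac{|S_{\mathbf u,p}|}{p+k-1}\Big),$$
and let $R_Q(\mathbf y)$ denote the error in counting the admissible tuples (pairwise coprime, each $n_i$ coprime to $u_i$) in the sub-box $\prod_i[1,y_i]$; by the generalized form of the quoted box count together with inclusion–exclusion over boxes, $R_Q(\mathbf y)\ll\theta(\mathbf u)(\max_i y_i)^{k-1}\log^{k-1}(\max_i y_i)$ for all $\mathbf y$ (trivially when some $y_i<1$). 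The indicator weight and the value $r=0$ will be handled at the end; the core is the product weight $f(\mathbf s)=(s_1\cdots s_k)^r$.

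First I would use the elementary identity $n^r=1+\int_1^n r t^{r-1}\,dt$, valid for every real $r$ and integer $n\ge1$. Inserting it in each coordinate and expanding gives $\prod_i n_i^r=\sum_{S\subseteq\{1,\dots,k\}}\prod_{i\in S}\int_1^{n_i}r t_i^{r-1}\,dt_i$, hence
$$\sum_{\text{tuples}}\prod_{i\le k}n_i^r=\sum_{S}\int\Big(\prod_{i\in S}r t_i^{r-1}\Big)\,\#\{\text{admissible}:\,n_i>t_i\ (i\in S),\ n_j\le N_j\ (\forall j)\}\,d\mathbf t_S,$$
and each inner count equals $C_{\mathbf u}\prod_{i\in S}(N_i-t_i)\prod_{j\notin S}N_j$ plus a signed combination of the box errors $R_Q$. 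Substituting the volume part and using $\int_1^{N}r t^{r-1}(N-t)\,dt=\frac{N^{r+1}}{r+1}-N+\frac{r}{r+1}$, the volume contributions collapse over $S$ to $C_{\mathbf u}\prod_i\frac{N_i^{r+1}+r}{r+1}$; dividing by $\prod_i N_i^r$ gives
$$\frac{C_{\mathbf u}}{(r+1)^k}\prod_{i\le k}\big(N_i+rN_i^{-r}\big)=\frac{C_{\mathbf u}}{(r+1)^k}\prod_i N_i+\frac{C_{\mathbf u}\,r}{(r+1)^k}\sum_{j}N_j^{-r}\prod_{i\ne j}N_i+\cdots.$$
The first term is the claimed main term, since $\tfrac1{(r+1)^k}=\int_{[0,1]^k}(s_1\cdots s_k)^r\,d\mathbf s$ and $C_{\mathbf u}\prod_i N_i=\tfrac{x^kT_k}{a_1\cdots a_k}\prod_p(\cdots)$. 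For $-1<r<0$ we have $N_j^{-r}=N_j^{|r|}$, and the secondary terms are exactly $\sum_j N_j^{|r|}\prod_{i\ne j}N_i=x^{k-1+|r|}\sum_j\big(a_j^{|r|}\prod_{i\ne j}a_i\big)^{-1}$, matching the stated additional error (higher terms being smaller); for $r\ge0$ they are $\le x^{k-1}$ and are absorbed.

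It then remains to bound the contribution of the $R_Q$-parts and to treat the indicator weight. For $r\ge0$ this is routine: the weight integrates over $[1,N_i]$ to $N_i^r-1$, the subsequent division by $N_i^r$ leaves bounded factors, and one gets $O(\theta(\mathbf u)x^{k-1}\log^{k-1}x)$. For $f=\mathbf 1[s_1\cdots s_k>t]$ the weight is bounded, so I would approximate the hyperbolic region by boxes, apply the box count on sub-boxes lying wholly inside or outside, and estimate the sub-boxes meeting $\{s_1\cdots s_k=t\}$; this again contributes $O(\theta(\mathbf u)x^{k-1}\log^{k-1}x)$ and yields the main term $C_{\mathbf u}\prod_i N_i\,(1-\Omega_k(t))$.

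I expect the genuine obstacle to be the $R_Q$-contribution when $-1<r<0$. There the weight $t^{r-1}$ is singular at the origin and concentrates on small coordinates, so the coarse pointwise bound $|R_Q|\ll\theta(\mathbf u)(\max_i y_i)^{k-1}\log^{k-1}$ overestimates $\int R_Q\prod_i t_i^{r-1}\,d\mathbf t$ by powers of $N_i$: the factor $t_i$ that tames the singular weight in the volume computation is simply not present in $R_Q$. To overcome this I would establish and use a sharper, coordinatewise perimeter-type form of the box error, namely $R_Q(\mathbf y)\ll\theta(\mathbf u)\log^{k-1}(\cdots)\sum_i\prod_{l\ne i}y_l$ (which the inductive argument behind the box count should deliver), and integrate one variable at a time so that each factor $\prod_{l\ne i}y_l$ supplies the missing $t_l$; the thin slabs $\{n_i=O(1)\}$ adjacent to each coordinate hyperplane, where $R_Q$ is smaller still and whose leading contribution has already been captured exactly in the additional-error term above, would be isolated and estimated separately. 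This is the step requiring all the care, and I would verify that after these refinements the total remainder is $O(\theta(\mathbf u)x^{k-1}\log^{k-1}x)$.
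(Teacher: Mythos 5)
Your reduction is genuinely different from the paper's proof: the paper establishes the weighted estimate directly by induction on $k$ (following Hu's argument, with the weight $f$ carried through the base case and every inductive step), whereas you convert the weighted sum into \emph{unweighted} box counts via the layer-cake identity $n^r=1+\int_1^n rt^{r-1}\,dt$. For the power weight with $r\ge 0$ your argument is correct, and the exact collapse of the volume parts to $C_{\mathbf u}\prod_i\bigl(N_i^{r+1}+r\bigr)/(r+1)$ is a clean way to see both the main term and, for $-1<r<0$, the precise shape of the additional term $O_r\bigl(x^{k-1+|r|}\sum_j (a_j^{|r|}\prod_{i\ne j}a_i)^{-1}\bigr)$.

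The fatal gap is the characteristic-function case $f=\mathbf 1[s_1\cdots s_k>t]$, which is half of the lemma (it is exactly what feeds (7) of Lemma 4.2 and hence Theorem 1.1) and which you dispatch in one sentence by approximating the hyperbolic region by boxes. That method cannot reach the stated error $O(\theta(\mathbf u)x^{k-1}\log^{k-1}x)$: every box in the decomposition costs one application of the box count, i.e.\ an error term of size up to $\theta(\mathbf u)x^{k-1}\log^{k-1}x$ which does not cancel across boxes, while the part of $\{s_1\cdots s_k>t\}$ not resolved by $M$ axis-parallel boxes has volume $\gg_t 1/M$ (the boundary hypersurface is strictly curved), hence contains $\gg_t x^k/M$ lattice points that can only be bounded crudely. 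Balancing the two contributions gives an error of order $x^{k-1/2}$ up to logarithmic and $\theta$ factors --- short of the claim by a factor $x^{1/2}$, no matter how the boxes are chosen. The paper's induction avoids this precisely because its inductive hypothesis is the count for the \emph{full} hyperbolic region in $k$ variables, uniform in the threshold; only one new variable $j\le b_{k+1}$ is ever summed over, which multiplies the error by $x\log x$ and nothing more. Some such slicing structure is necessary here; a box approximation of the region is the wrong tool.

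The second gap is the one you flag yourself. For $-1<r<0$ you correctly observe that the coarse bound on $R_Q$ fails after dividing by $\prod_i N_i^r$ (it inflates by $\prod_i N_i^{|r|}$, up to $x^{k|r|}$), but your remedy --- a perimeter-type bound $R_Q(\mathbf y)\ll\theta(\mathbf u)\log^{k-1}(\cdots)\sum_i\prod_{l\ne i}y_l$ --- is proved neither in the paper, nor in Hu, nor by you; establishing it means rerunning Hu's induction with sharper bookkeeping, at which point you are essentially performing the induction the paper does directly, where this case needs only the one-variable expansion $\sum_{n\le x}n^r=\frac{x^{r+1}}{r+1}+\zeta(-r)+O(x^r)$ inserted into the base and inductive steps. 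Moreover, even granting that bound, your integration produces the additional term with extra factors $\theta(\mathbf u)\log^{k-1}x$ attached, which is weaker than the statement. In summary: correct and independent for $r\ge 0$; conditional on an unproven auxiliary lemma for $-1<r<0$; and quantitatively broken for the indicator case.
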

    \begin{proof}
    For simplicity of notation, we write $x/a_i=b_i$ for each $i$. We follow the inductive proof given in~\cite{H2}. We include the proof for completeness.
    If $k=1$, the desired sum is
    $$
    \sum_{\substack{{n\leq b_1}\\{(n,u_1)=1}}} f(n/b_1).
    $$
    If $f(s)=s^r$, $r\geq 0$, then
    \begin{align*}
    \sum_{\substack{{n\leq b_1}\\{(n,u_1)=1}}} (n/b_1)^r &=\sum_{d|u_1}\mu(d)\sum_{\substack{{n\leq b_1}\\{d|n}}}\left(\frac n{b_1}\right)^r\\
    &=\sum_{d|u_1} \mu(d)\frac1{b_1^r} \sum_{k\leq \frac{b_1}d}d^rk^r=\sum_{d|u_1}\mu(d)\left(\frac d{b_1}\right)^r \left(\left(\frac{b_1}d\right)^{r+1}\frac1{r+1} + O\left(\left(\frac{b_1}d\right)^r\right)\right)\\
    &=\sum_{d|u_1}\frac{\mu(d)}d \frac{b_1}{r+1} + O\left(\sum_{d|u_1}|\mu(d)|\right)=b_1T_1 \int_0^1 s^rds \prod_{p|u_1}\left(1-\frac1p\right)  + O(2^{\omega(u_1)}).
    \end{align*}
    If $f(s)$ is the characteristic function of $(t,1]$, then
    \begin{align*}
    \sum_{\substack{{n\leq b_1}\\{(n,u_1)=1}\\{n/b_1>t}}}1&=\sum_{d|u_1}\mu(d)\sum_{\substack{{tb_1<n\leq b_1}\\{d|n}}}1\\
    &=\sum_{d|u_1}\mu(d)\sum_{\frac{tb_1}d<k\leq \frac{b_1}d}1=\sum_{d|u_1}\mu(d)\left(\frac{(1-t)b_1}d+O(1)\right)\\
    &=\sum_{d|u_1}\frac{\mu(d)}d (1-t)b_1+O\left(\sum_{d|u_1}|\mu(d)|\right)=b_1T_1\int_t^1 ds \prod_{p|u_1}\left(1-\frac1p\right)+O(2^{\omega(u_1)}).
    \end{align*}
    Thus, the results are true for both functions when $k=1$. Suppose that the result is true for some $k\geq 1$ and $f(\mathbf{s})=(s_1\cdots s_k)^r$, $r\geq 0$. That is,
    $$
    Q_{K_k,f}^{\mathbf{u}}(\mathbf{a},x)=\frac{b_1b_2\cdots b_k T_k }{(r+1)^k}  f_k(\mathbf{u})+O(\theta(\mathbf{u})x^{k-1}\log^{k-1}x),
    $$
    where
    $$
    f_k(\mathbf{u})=\prod_{p|u_1u_2\cdots u_k} \left( 1-\frac{|S_{\mathbf{u},p}|}{p+k-1}\right).
    $$
    To prove the result for $k+1$, we write
    \begin{align*}
    &Q_{K_{k+1},fs_{k+1}^r}^{\mathbf{u},u_{k+1}}(\mathbf{a},a_{k+1},x)=\sum_{\substack{{j\leq b_{k+1}}\\{(j,u_{k+1})=1}}}\left(\frac j{b_{k+1}}\right)^rQ_{K_k,f}^{j\mathbf{u}}(\mathbf{a},x)\\
    &=\sum_{\substack{{j\leq b_{k+1}}\\{(j,u_{k+1})=1}}}\left(\frac j{b_{k+1}}\right)^r\left(\frac{b_1\cdots b_k T_k}{(r+1)^k}f_k(j\mathbf{u})+O(\theta(j\mathbf{u})x^{k-1}\log^{k-1}x)\right)\\
    &=\sum_{\substack{{j\leq b_{k+1}}\\{(j,u_{k+1})=1}}}\left(\frac j{b_{k+1}}\right)^r \frac{b_1\cdots b_k T_k}{(r+1)^k}f_k(j\mathbf{u})+O(\theta(\mathbf{u})x^k\log^k x).
    \end{align*}
    Since
    $$
    f_k(j\mathbf{u})=f_k(\mathbf{u})\sum_{d|j}\mu(d)\prod_{p|d}\frac{k-|S_{\mathbf{u},p}|}{p+k-1-|S_{\mathbf{u},p}|},
    $$
    we focus on the sum over $j$. That is,
    $$
    \sum_{\substack{{j\leq b_{k+1}}\\{(j,u_{k+1})=1}}}\left(\frac j{b_{k+1}}\right)^r \sum_{d|j}\mu(d)g_k(d),
    $$
    where
    $$
    g_k(d)=\prod_{p|d}\frac{k-|S_{\mathbf{u},p}|}{p+k-1-|S_{\mathbf{u},p}|}\leq \prod_{p|d} \frac k{p+k-1}\leq \frac{k^{\omega(d)}}d
    $$
    if $d$ is square-free.

    Substituting $de=j$ in the double sum, we have
    \begin{align*}
    \sum_{\substack{{de\leq b_{k+1}}\\{(de,u_{k+1})=1}}}&\left(\frac{de}{b_{k+1}}\right)^r \mu(d)g_k(d)=\sum_{\substack{{d\leq b_{k+1}}\\{(d,u_{k+1})=1}}}\mu(d)g_k(d)\sum_{\substack{{e\leq b_{k+1}/d}\\{(e,u_{k+1})=1}}}\left(\frac d{b_{k+1}}\right)^re^r\\
    &=\sum_{\substack{{d\leq b_{k+1}}\\{(d,u_{k+1})=1}}}\mu(d)g_k(d)\left(\frac{b_{k+1}}d\frac1{r+1}f_1(u_{k+1})+O(2^{\omega(u_{k+1})})\right)\\
    &=\sum_{\substack{{d\leq b_{k+1}}\\{(d,u_{k+1})=1}}}\mu(d)g_k(d) \frac{b_{k+1}}d\frac1{r+1}f_1(u_{k+1})+O(\theta(u_{k+1})\log^kx)\\
    &=\frac{b_{k+1}f_1(u_{k+1})}{r+1}\sum_{(d,u_{k+1})=1}\frac{\mu(d)g_k(d)}d + O(\log^{k-1}x)+O(\theta(u_{k+1})\log^kx).
    \end{align*}
    The sum over $d$ is a convergent Euler product
    $$
    \prod_{p\nmid u_{k+1}}\left(1-\frac{g_k(p)}p\right).
    $$
    Rearranging the product over primes, we obtain the desired result for $k+1$ as follows.
    $$
    Q_{K_{k+1},fs_{k+1}^r}^{\mathbf{u},u_{k+1}}(\mathbf{a},a_{k+1},x)=\frac{b_1b_2\cdots b_{k+1}T_{k+1}}{(r+1)^{k+1}}f_{k+1}(\mathbf{u},u_{k+1})+O(\theta(\mathbf{u},u_{k+1}) x^k \log^k x).
    $$

    Suppose that the result is true for $k$ with $f$ is the characteristic function of the set $\{(s_1,\ldots, s_k)\in [0,1]^k \ | \ s_1 \cdots  s_k>t\}$. That is, for $0<t\leq 1$,
    $$
    Q_{K_k,f}^{\mathbf{u}}(\mathbf{a},x)=b_1b_2\cdots b_k T_k f_k(\mathbf{u})\int_{\substack{{\forall i, 0\leq s_i\leq 1}\\{s_1s_2\cdots s_k>t}}} d\mathbf{s} +O(\theta(\mathbf{u})x^{k-1}\log^{k-1}x).
    $$
    To prove the result for $k+1$, we proceed as before. Denote by $\overline{f}$ the characteristic function of the set $\{(s_1,\ldots, s_{k+1})\in [0,1]^{k+1}\ | \ s_1 \cdots  s_{k+1} >t\}$. Then we have
    \begin{align*}
    &Q_{K_{k+1},\overline{f}}^{\mathbf{u},u_{k+1}}(\mathbf{a},a_{k+1},x)\\
    &=\sum_{\substack{{j\leq b_{k+1}}\\{(j,u_{k+1})=1}}}\left(b_1\cdots b_k T_k\int_{\substack{{\forall i, 0\leq s_i\leq 1}\\{s_1\cdots s_k>t/(j/b_{k+1})}}}d\mathbf{s}f_k(j\mathbf{u})+O(\theta(j\mathbf{u})x^{k-1}\log^{k-1}x)\right)\\
    &=\sum_{\substack{{j\leq b_{k+1}}\\{(j,u_{k+1})=1}}}b_1\cdots b_k T_k\int_{\substack{{\forall i, 0\leq s_i\leq 1}\\{s_1\cdots s_k>t/(j/b_{k+1})}}}d\mathbf{s}f_k(j\mathbf{u})+O(\theta(\mathbf{u})x^k\log^k x).
    \end{align*}
    We obtain by the substitutions $de=j$ and $mv=e$,
    \begin{align*}
    \sum_{\substack{{j\leq b_{k+1}}\\{(j,u_{k+1})=1}}}&\int_{\substack{{\forall i, 0\leq s_i\leq 1}\\{s_1\cdots s_k>t/(j/b_{k+1})}}}d\mathbf{s}\sum_{d|j}\mu(d)g_k(d)=\sum_{\substack{{d\leq b_{k+1}}\\{(d,u_{k+1})=1}}}\mu(d)g_k(d)\sum_{\substack{{e\leq \frac{b_{k+1}}d}\\{(e,u_{k+1})=1}}}\int_{\substack{{\forall i, 0\leq s_i\leq 1}\\{s_1\cdots s_k>t/(de/b_{k+1})}}}d\mathbf{s}\\
    &=\sum_{\substack{{d\leq b_{k+1}}\\{(d,u_{k+1})=1}}}\mu(d)g_k(d)\sum_{m|u_{k+1}}\mu(m)\sum_{v\leq\frac{b_{k+1}}{dm}}\int_{\substack{{\forall i, 0\leq s_i\leq 1}\\{s_1\cdots s_k>t/(v/(b_{k+1}/dm))}}}d\mathbf{s}\\
    &=\sum_{\substack{{d\leq b_{k+1}}\\{(d,u_{k+1})=1}}}\mu(d)g_k(d)\sum_{m|u_{k+1}}\mu(m)\left(\frac{b_{k+1}}{dm}\int_{\substack{{\forall i, 0\leq s_i\leq 1}\\{s_1\cdots s_ks_{k+1}>t}}}d\mathbf{s}ds_{k+1}+O(1)\right)\\
    &=b_{k+1}f_1(u_{k+1})\int_{\substack{{\forall i,0\leq s_i\leq 1}\\{s_1\cdots s_ks_{k+1}>t}}}d(\mathbf{s},s_{k+1})\sum_{(d,u_{k+1})=1}\frac{\mu(d)g_k(d)}d \\&+ O(\log^{k-1}x)+O(\theta(u_{k+1})\log^kx).
    \end{align*}
    Rearranging the product over primes, we obtain the desired result for $k+1$ as follows.
    \begin{align*}
    Q_{K_{k+1},\overline{f}}^{\mathbf{u},u_{k+1}}(\mathbf{a},a_{k+1},x)=&b_1b_2\cdots b_{k+1}T_{k+1}\int_{\substack{{\forall i,0\leq s_i\leq 1}\\{s_1\cdots s_ks_{k+1}>t}}}d(\mathbf{s},s_{k+1})f_{k+1}(\mathbf{u},u_{k+1})\\&+O(\theta(\mathbf{u},u_{k+1}) x^k \log^k x).
    \end{align*}
    If $-1<r<0$, remark that
    $$
    \sum_{n\leq x} n^r = \frac1{r+1} x^{r+1} + \zeta(-r) + O(x^r).
    $$
    Applying this in the base step and the induction step of the proof, we see that the additional error term appears.
    \end{proof}
    \section{Divisibility conditions}
    Recall from~\cite{BMR} that
    for each prime $p$, $(\mathcal{G}_j(p))$ is a sequence of independent random variables such that $\mathcal{G}_j(p)$ has a geometric distribution such that
    $$
    \mathcal{G}_j(p)=m \ \textrm{ with probability }\left(1-\frac1p\right)\frac1{p^m} \ \textrm{ for $m\geq 0$,}
    $$
    and
    $$
    R_k:= \prod_{p\in\mathcal{P}}p^{\max\limits_{j\leq k} \mathcal{G}_j(p)-\sum\limits_{j\leq k} \mathcal{G}_j(p)}\in 1/\N.
    $$
    Let $X_j^{(x)}$ be the discrete uniform distribution over the set $\{1,\ldots, x\}$. The distribution $\mathcal{G}_j(p)$ is a limiting distribution (as $x\rightarrow\infty$) of
    $$\{\prob(X_j^{(x)} \textrm{ is divisible by }p^m \textrm{ and not by }p^{m+1})\}_{m\geq 0}.$$
    It follows that we also have
    $$
    \lim_{x\rightarrow\infty} \prob(X_j^{(x)} \textrm{ is divisible by }p^m)=\frac1{p^m}=\prob(\mathcal{G}_j(p)\geq m).
    $$
    Thus, we see that the value of $\mathcal{G}_j(p)$ yields a $p$-power divisibility condition on $X_j^{(x)}$. We write $p$-power divisibility condition on $k$-tuple $(X_1^{(x)},\ldots, X_k^{(x)})$ as a {\it tagged vector }$\langle e_1, \ldots, e_i\uparrow, \ldots, e_k \rangle_p$, each component may have a tag $\uparrow$. The use of uparrow notation is to indicate the particular number "or higher".  If we have the tag on a component $i$, then it means $p^{e_i}|X_i^{(x)}$: i.e. $\mathcal{G}_i(p)\geq e_i$. A component $i$ without tag means $p^{e_i}||X_i^{(x)}$: i.e. $\mathcal{G}_i(p)= e_i$. For example, a tagged vector $\langle 2\uparrow,3,4\uparrow, 6\rangle_7$ means $7^2|X_1^{(x)}$, $7^3||X_2^{(x)}$, $7^4|X_3^{(x)}$, and $7^6||X_4^{(x)}$ (or $\mathcal{G}_1(7)\geq 2$, $\mathcal{G}_2(7)= 3$, $\mathcal{G}_3(7)\geq 4$, and $\mathcal{G}_4(7)=6$). Let $S\subseteq \mathcal{P}$ be a subset of prime numbers. We may combine $p$-power divisibility conditions of $k$-tuples for $p\in S$. For such combined divisibility conditions, we write
    $$
    \wedge_{p\in S} \langle e_{p,1}, \ldots, e_{p,i}\uparrow, \ldots, e_{p,k}\rangle_p.
    $$
    For example, $\langle 2, 3\uparrow\rangle_2 \wedge \langle 4\uparrow, 5\rangle_3$ means $2^2||X_1^{(x)}$, $2^3|X_2^{(x)}$, $3^4|X_1^{(x)}$, and $3^5||X_2^{(x)}$: i.e. $\mathcal{G}_1(2)=2$, $\mathcal{G}_2(2)\geq 3$, $\mathcal{G}_1(3)\geq 4$, and $\mathcal{G}_2(3)=5$.
    For each prime $p$ with divisibility condition $\langle e_{p,1},\ldots, e_{p,k}\rangle_p$ without $\uparrow$, consider
    $$i_0=\max\{i\leq k : e_{p,i}=\max_{j\leq k} e_{p,j}\}.$$
    We call $e_{p,i_0}$ a {\it dropped maximum}. This naming is because $\max\limits_{j\leq k}\mathcal{G}_j(p)-\sum\limits_{j\leq k} \mathcal{G}_j(p)$ drops the maximum and sums up the remaining $k-1$ numbers. For
    $$i_1=\max\{i\leq k:e_{p,i}=\max_{j\leq k, j\neq i_0} e_{p,j}\},$$ we call $e_{p,i_1}$ the {\it second maximum}.

    Let $n\in\N$. We will find $\prob(R_k=\frac1n)=p_k(n)$ from the definition of $R_k$. Let $p^e||n$. Denote by $\mathbf{x}=(x_1,\ldots, x_{k-1})$ the $(k-1)$-tuple of nonnegative integers with a sum equals $e$. Let $$i_1:=i_1(\mathbf{x})=\max\{i\leq k-1:x_i=\max_{j\leq k-1} x_j\}$$ so that $x_{i_1}$ is the maximum entry of ${\bf x}$. Note that the second maximum of a valid divisibility condition is necessarily the maximum of the entries of $\mathbf{x}$. Then the following table shows all possible divisibility conditions.
    \begin{center} \begin{tabular}{| c | c | }
   \hline
   Divisibility conditions & Dropped position  \\ \hline \hline
   $\langle x_1, \ldots, x_{i_1}, x_{i_1+1},\ldots, x_{k-1}, x_{i_1}\uparrow \rangle_p$ & $k$ \\ \hline
   $\langle x_1, \ldots, x_{i_1}, x_{i_1+1},\ldots, x_{i_1}\uparrow, x_{k-1} \rangle_p$ & $k-1$  \\ \hline
   $\vdots$ & $\vdots$  \\ \hline
   $\langle x_1, \ldots, x_{i_1}, x_{i_1}\uparrow, x_{i_1+1}, \ldots, x_{k-1}\rangle_p$ & $i_1+1$  \\ \hline
   \hline
   $\langle x_1, \ldots, x_{i_1-1}, (x_{i_1}+1)\uparrow, x_{i_1}, x_{i_1+1},\ldots, x_{k-1}\rangle_p$ & $i_1$  \\ \hline
   $\langle x_1, \ldots, (x_{i_1}+1)\uparrow, x_{i_1-1},x_{i_1}, \ldots, x_{k-1}\rangle_p$ & $i_1-1$   \\ \hline
   $\vdots$ & $\vdots$ \\ \hline
   $\langle (x_{i_1}+1)\uparrow, x_1, \ldots, x_{i_1}, x_{i_1+1},\ldots, x_{k-1}\rangle_p$ & $1$  \\ \hline
 \end{tabular}
\end{center}
    The probability that $p$-part of $R_k$ equals $e$ is
    $$
    \sum_{\substack{{x_1+\cdots +x_{k-1}=e} \\{x_i\geq 0}}}\left(1-\frac1p\right)^{k-1}\frac1{p^e}\left(\frac{k-i_1(\mathbf{x})}{p^{x_{i_1}}}+\frac{i_1(\mathbf{x})}{p^{x_{i_1}+1}}\right).
    $$
    By the independence of $\mathcal{G}_j(p)$ for any prime $p$ and $j\geq 1$, we obtain
    \begin{equation}
    p_k(n)=\prod_{p\in\mathcal{P}}\sum_{\substack{{x_1+\cdots +x_{k-1}=e} \\{x_i\geq 0}}}\left(1-\frac1p\right)^{k-1}\frac1{p^e}\left(\frac{k-i_1(\mathbf{x})}{p^{x_{i_1}}}+\frac{i_1(\mathbf{x})}{p^{x_{i_1}+1}}\right).
    \end{equation}\label{3}
    Note that if a prime $q$ does not divide $n$, then $e=0$, $x_{i_1}=0$, and $i_1(\mathbf{x})=k-1$, so it contributes
    $$
    \left(1-\frac1q\right)^{k-1}\left(1+\frac{k-1}q\right).
    $$
    Therefore, the product (\ref{3}) converges. We are able to factor out $T_k$ and rewrite (\ref{3}) as
    \begin{equation}
    p_k(n)=T_k \prod_{p^e||n}\frac1{1+(k-1)/p}\sum_{\substack{{x_1+\cdots +x_{k-1}=e} \\{x_i\geq 0}}}\frac1{p^e} \left(\frac{k-i_1(\mathbf{x})}{p^{x_{i_1}}}+\frac{i_1(\mathbf{x})}{p^{x_{i_1}+1}}\right).
    \end{equation}\label{4}
    We have a single variable function $F_k(s)$ on $\Re s >-1$ defined as a Dirichlet series
    \begin{align*}
    F_k(s)&=\sum_{n=1}^{\infty} n^{-s} \ p_k(n)\\
    &=T_k \prod_{p\in\mathcal{P}}\left(1+\frac1{1+(k-1)/p}\sum_{e=1}^{\infty}\sum_{\substack{{x_1+\cdots +x_{k-1}=e} \\{x_i\geq 0}}} \frac1{p^{(s+1)e}} \left(\frac{k-i_1(\mathbf{x})}{p^{x_{i_1}}}+\frac{i_1(\mathbf{x})}{p^{x_{i_1}+1}}\right)\right).
    \end{align*}
    Note that if $r>-1$, $F_k(r)=C_{r,k}$ in~\cite[Corollary 1]{HT}. Moreover, the $p$-part of Euler product can be written in a finite sum. This is~\cite[Corollary 2.7]{BMR}.
    $$
    F_k(s)=\E R_k^s=\prod_{p\in\mathcal{P}} \frac{\left(1-\frac1p\right)^k}{\left(1-\frac 1{p^{s+1}}\right)^k}\sum_{j=1}^k\binom kj (-1)^{j-1}\frac{1-\frac1{p^{j(s+1)}}}{1-\frac 1{p^{(j-1)(s+1)+1}}}.
    $$
    It will be an interesting problem showing the two expressions of $F_k(s)$ are equivalent directly by means of rearranging sums and products. However, we did not try this here.
    \section{Distribution of lcm - main lemma}
    In~\cite{BMR}, the limiting distribution of $\frac{[X_1^{(x)},\ldots,X_k^{(x)}]}{X_1^{(x)}\cdots X_k^{(x)}}$ is proven to be $R_k$. Thus, we must have
    $$
    \lim_{x\rightarrow\infty}\prob\left(\frac{[X_1^{(x)},\ldots,X_k^{(x)}]}{X_1^{(x)}\cdots X_k^{(x)}}=\frac1n\right)=p_k(n).
    $$
    We would like to have a quantitative asymptotic formula that is uniform on both $n$ and $x$. We apply the method of section 3 to prove the following main lemma of this paper. Denote by  $P_x^{(1)}(n)$ the following item
    $$
    P_x^{(1)}(n)=\prob\left(\frac{[X_1^{(x)},\ldots,X_k^{(x)}]}{X_1^{(x)}\cdots X_k^{(x)}}=\frac1n\right).$$
    We have
    \begin{lemma}
    Let $k\geq 2$. Uniformly for $n\in\N$ and $x\geq 2$, we have
    \begin{equation}
    P_x^{(1)}(n)=p_k(n)+O\left(\tau_{k-1}(n)(2k)^{\omega(n)}x^{-1}\log^{k-1} x\right).
    \end{equation}
    \end{lemma}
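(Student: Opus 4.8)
The plan is to convert the probability into a lattice-point count and exploit the multiplicativity of the ratio $n_1\cdots n_k/[n_1,\ldots,n_k]$. Writing $a_{p,i}=v_p(n_i)$ for the $p$-adic valuation, the $p$-part of this ratio is $p^{\sum_i a_{p,i}-\max_i a_{p,i}}$, so $P_x^{(1)}(n)=x^{-k}N(n)$, where $N(n)$ is the number of $(n_1,\ldots,n_k)\in[1,x]^k$ for which $\sum_i a_{p,i}-\max_i a_{p,i}=v_p(n)$ at every prime $p$. First I would factor each $n_i=e_im_i$ into its $n$-part $e_i$ (supported on the primes dividing $n$) and its coprime-to-$n$ part $m_i$. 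At a prime $p\nmid n$ the condition reads $\sum_i a_{p,i}=\max_i a_{p,i}$, i.e.\ at most one $n_i$ is divisible by $p$; hence the constraint away from $n$ is exactly that $m_1,\ldots,m_k$ be pairwise coprime and each coprime to $n$, while the constraints at $p^e\Vert n$ become conditions on the valuation vector of $\mathbf{e}=(e_1,\ldots,e_k)$ identical to those analysed in Section~3.

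For each valid $\mathbf{e}$ with all $e_i\le x$, the number of admissible $(m_1,\ldots,m_k)$ is $Q_{K_k}^{\mathbf{u}}(\mathbf{e},x)$ with $\mathbf{u}=(n,\ldots,n)$ and $f\equiv1$. Since $|S_{\mathbf{u},p}|=k$ for every $p\mid n$, Lemma~2.1 gives
$$
Q_{K_k}^{(n,\ldots,n)}(\mathbf{e},x)=\frac{x^kT_k}{e_1\cdots e_k}\prod_{p\mid n}\frac{p-1}{p+k-1}+O\!\left(2^{\omega(n)}x^{k-1}\log^{k-1}x\right).
$$
Summing the main term over all valid $\mathbf{e}$, I would use the bijection from Section~3 — a choice of dropped (largest-index) maximal position, the tuple $\mathbf{x}=(x_1,\ldots,x_{k-1})$ of the remaining exact valuations, and a free dropped height above the threshold $x_{i_1}$ or $x_{i_1}+1$ — to factor $\sum_{\mathbf{e}}(e_1\cdots e_k)^{-1}=\prod_{p^e\Vert n}\sigma_p$, where the geometric sum over the dropped height supplies a factor $p/(p-1)$ and yields
$$
\sigma_p=\frac{p}{p-1}\,\frac1{p^e}\sum_{x_1+\cdots+x_{k-1}=e}\left(\frac{k-i_1(\mathbf{x})}{p^{x_{i_1}}}+\frac{i_1(\mathbf{x})}{p^{x_{i_1}+1}}\right).
$$
Comparing with (\ref{4}) shows $\sigma_p=\frac{p+k-1}{p-1}$ times the local factor of $p_k(n)$, so the product $\prod_{p\mid n}\frac{p-1}{p+k-1}$ from the coprime count cancels it precisely and the total main term collapses to $x^kp_k(n)$; dividing by $x^k$ returns $p_k(n)$.

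It remains to control the error, which is where the real work lies. Two pieces arise: the tail of the main-term series from discarding those $\mathbf{e}$ with some $e_i>x$, and the accumulated $O$-terms of the coprime counts. The subtle point is that the dropped height ranges over an infinite set of exponents, so a crude uniform bound $O(2^{\omega(n)}x^{k-1}\log^{k-1}x)$ per $\mathbf{e}$ would diverge once summed; I expect this to be the main obstacle. The way around it is to use a version of the coprime-count error that decays as the dropped component's interval $[1,x/e_{i_0}]$ shrinks, so the sum over the dropped height converges geometrically and is dominated by its threshold term. Granting such a box-sensitive estimate, the surviving contribution is governed by the finitely many discrete choices: the $\tau_{k-1}(n)$ admissible tuples $\mathbf{x}$ (one factor $\binom{e+k-2}{k-2}$ per prime), the $k^{\omega(n)}$ choices of dropped position, and the $2^{\omega(n)}$ from the coprimality sieve, which multiply to $\tau_{k-1}(n)(2k)^{\omega(n)}$. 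Each carries an error of size $x^{k-1}\log^{k-1}x$, and after dividing by $x^k$ this produces the stated bound $O\!\left(\tau_{k-1}(n)(2k)^{\omega(n)}x^{-1}\log^{k-1}x\right)$, uniformly in $n$ and $x$.
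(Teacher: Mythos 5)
Your reduction to pairwise-coprime counts and your main-term bookkeeping are both correct: factoring $n_i=e_im_i$, taking $\mathbf{u}=(n,\ldots,n)$ so that $|S_{\mathbf{u},p}|=k$, and cancelling the geometric factor $p/(p-1)$ from the dropped height against $\prod_{p\mid n}\frac{p-1}{p+k-1}$ does reproduce the formula for $p_k(n)$ derived in Section 3. The gap is exactly the one you flagged yourself: the ``box-sensitive'' error estimate you need does not exist. For counts of pairwise coprime tuples in a box $\prod_j[1,b_j]$, the deviation from the product main term is genuinely of order $\max_i\prod_{j\neq i}b_j$: for instance, if $b_{i_0}\in[1,2)$ the true count is a $(k-1)$-dimensional coprime count of size $\asymp T_{k-1}\prod_{j\neq i_0}b_j$ (times local factors), while the proposed main term is $\asymp b_{i_0}T_k\prod_{j\neq i_0}b_j$, so the difference is a positive constant times $\prod_{j\neq i_0}b_j$. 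This quantity does not involve $b_{i_0}$ at all, hence cannot decay as the dropped component's interval shrinks. Concretely, for the valuation vectors $\mathbf{e}$ whose dropped positions coincide at all primes, the per-box error is bounded below by $\asymp x^{k-1}/n$ independently of the dropped heights, and the heights range over $\asymp\log_p x$ values at each $p\mid n$; so summing over $\mathbf{e}$ costs a factor $\asymp\prod_{p\mid n}\log_p x$, which for fixed $n$ grows like $\log^{\omega(n)}x$. This loss is not an artifact of weak per-box estimates (and truncating the height sums and bounding the tails trivially loses logarithms in the same way), so your route cannot reach the claimed uniform bound $O\left(\tau_{k-1}(n)(2k)^{\omega(n)}x^{-1}\log^{k-1}x\right)$, e.g.\ it provably fails once $\omega(n)>k-1$.

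The paper's proof avoids the infinite sum over dropped heights entirely, and this is the one idea your argument is missing. Instead of parametrizing by the exact valuation of the dropped component, it groups all heights above the threshold into a single event: the tagged condition $\uparrow$, meaning ``$p^{e_{i,j_i}}$ divides $n_{j_i}$'' with no exactness constraint. For each $M\in B_n$ (a choice, at each prime, of the exact exponents $\mathbf{x}$ and of the dropped position, $\tau_{k-1}(n)k^{\omega(n)}$ choices in all) one substitutes $n_j=a_jv_j$, where $a_j$ records the threshold exponents, and the coprimality datum $u_j$ omits the prime $p_i$ precisely when $j=j_i$; the variable $v_{j_i}$ is then unconstrained at $p_i$, so the whole geometric series over dropped heights sits inside the main term of a \emph{single} application of Lemma 2.1 rather than being spread across its errors. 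Each $M$ thus contributes one error $O(2^{\omega(n)}x^{k-1}\log^{k-1}x)$, and summing over the finitely many $M\in B_n$ gives exactly the stated bound. If you repair your argument by lumping each height tail into such a divisibility event, it collapses into the paper's proof.
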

    \begin{proof}
    Let $n=p_1^{e_1}\cdots p_m^{e_m}$ where $p_i$'s are distinct primes and $e_i\geq 1$. For each $p_i$, a solution of $x_1+\cdots +x_{k-1}=e_i$ gives rise to $k$ distinct divisibility conditions (see the table in section 3, each has exactly one $\uparrow$). The number of solutions of $x_1+\cdots +x_{k-1}=e_i$ is $\tau_{k-1}(p_i^{e_i})$. Thus, each prime factor of $n$ gives rise to $\tau_{k-1}(p_i^{e_i}) k$ divisibility conditions. The number of ways to combine all possible $p$-power divisibility conditions is $\prod_{p|n} (\tau_{k-1}(p_i^{e_i}) k)=\tau_{k-1}(n)k^{\omega(n)}$. Denote by $B_n$ the set of all combined divisibility conditions so that  $|B_n|=\tau_{k-1}(n)k^{\omega(n)}$. Let $M\in B_n$ be written in the following form
    \begin{center}\begin{tabular}{| c | c |}
    \hline
    Divisibility condition & Tagged ($\uparrow$) entry \\ \hline\hline
    $\langle e_{1,1}, \ldots, e_{1,k} \rangle_{p_1}$ & $e_{1,j_1} \uparrow$ \\ \hline
    $\langle e_{2,1}, \ldots, e_{2,k} \rangle_{p_2}$ & $e_{2,j_2} \uparrow$ \\ \hline
    $\vdots$ & $\vdots$ \\ \hline
    $\langle e_{m,1}, \ldots, e_{m,k} \rangle_{p_m}$ & $e_{m,j_m} \uparrow$ \\ \hline
    \end{tabular}\end{center}
    We count the number of $k$-tuples with the following conditions.
    $$
    (n_1,\ldots, n_k)\in [1,x]^k \textrm{ and $M\in B_n$ is satisfied.}  \ \ (*)
    $$
    Let
    $$
    v_1=\frac{n_1}{a_1}, \ldots,\textrm{ and } v_k=\frac{n_k}{a_k},
    $$
    where $a_1=p_1^{e_{1,1}}p_2^{e_{2,1}}\cdots p_m^{e_{m,1}}$, $a_2=p_1^{e_{1,2}}p_2^{e_{2,2}}\cdots p_m^{e_{m,2}}, \ldots,$ and $a_k=p_1^{e_{1,k}}p_2^{e_{2,k}}\cdots p_m^{e_{m,k}}$.
    Let $I_1, \ldots, I_k$ be indicator functions with
    $$
    I_j(i)=\begin{cases} 0 &\mbox{ if }j=j_i\\
    1 &\mbox{ if }j\neq j_i.\end{cases}
    $$
    Let $u_j=\prod_{i\leq m} p_i^{I_j(i)}$, $j\leq k$, $\mathbf{u}=\langle u_1,\ldots, u_k\rangle$, and $\mathbf{a}=\langle a_1,\ldots, a_k\rangle$. Then $(*)$ is equivalent to
    $$
    (v_1,\ldots, v_n)\in \prod_{i\leq k} \left[1,\frac x{a_i}\right], \forall {j\leq k}, (u_j, v_j)=1, \textrm{ and } (v_1,\ldots, v_k) \textrm{ is pairwise coprime}. \  \ (**)
    $$
    By $r=0$ case of Lemma 2.1, the number of tuples satisfying $(**)$ is
    $$
    Q_{K_k,1}^{\mathbf{u}}(\mathbf{a},x)=\frac{x^kT_k}{a_1a_2\cdots a_k}\prod_{p|u_1u_2\cdots u_k}\left(1-\frac{|S_{\mathbf{u},p}|}{p+k-1}\right)+O(\theta(\mathbf{u})x^{k-1}\log^{k-1} x).
    $$
    The set of prime factors of $u_1\cdots u_k$ and that of $n$ are identical. Since each row of $M$ has only one tagged ($\uparrow$) entry, $|S_{\mathbf{u},p_i}|=k-1$ for each $i\leq m$. Thus, the number of tuples $(n_1,\ldots, n_k)$ satisfying $(*)$ is
    $$
    \frac{x^kT_k}{a_1a_2\cdots a_k} \prod_{p|n}\left(1-\frac{k-1}{p+k-1}\right)+O(2^{\omega(n)}x^{k-1}\log^{k-1} x).
    $$
    Using $1-(k-1)/(p+k-1)=p/(p+k-1)=1/(1+(k-1)/p)$ and summing over all $M\in B_n$, we obtain the result.
    \end{proof}
    Define
    $$
    P_x^{(2)}(n)=\frac1{x^k}\sum_{\substack{{\forall {i\leq k}, n_i\leq x}\\{\frac{[n_1,\ldots,n_k]}{n_1\cdots n_k}=\frac1n}}} \left(\frac{n_1}x\right)^r\cdots \left(\frac{n_k}x\right)^r.
    $$
    We adopt notations $\mathbf{x}$, $x_{i_1}$, and $i_1(\mathbf{x})$ of section 3. Let $q_k(n)$ be the sum of the following terms arising from each $M\in B_n$,
    $$\frac1{a_1^{|r|} a_2\cdots a_k}+ \frac1{a_1 a_2^{|r|}a_3\cdots a_k}+ \cdots + \frac1{a_1\cdots a_{k-1} a_k^{|r|}}.$$
    Applying the functions $f(\mathbf{s})$ given in Lemma 2.1, we have the analogous results.
    \begin{lemma}Uniformly for $x\geq 2$ and $n\in \N$, for any $r\geq 0$, we have
    \begin{equation}P_x^{(2)}(n)=\frac1{(r+1)^k}p_k(n)+O_r\left(\tau_{k-1}(n)(2k)^{\omega(n)}x^{-1}\log^{k-1} x\right).
    \end{equation}
    If $-1<r<0$, we have (6) with an additional error term
    $$
    O_r(q_k(n) x^{-(r+1)}).
    $$
    Moreover, if $0<t\leq 1$, we have
    \begin{equation}
    \frac1{x^k}\sum_{\substack{{\forall {i\leq k}, n_i\leq x}\\{\frac{[n_1,\ldots,n_k]}{n_1\cdots n_k}=\frac1n}\\{n_1\cdots n_k>tx^k}}}1=(1-\Omega_k(t))p_k(n)+O\left(\tau_{k-1}(n)(2k)^{\omega(n)}x^{-1}\log^{k-1} x\right).
    \end{equation}
    \end{lemma}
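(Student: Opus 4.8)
The plan is to repeat the proof of Lemma 4.1 at the structural level, substituting the weighted instances of Lemma 2.1 for the pure-counting ($r=0$) instance used there. I would fix $n=p_1^{e_1}\cdots p_m^{e_m}$, form the set $B_n$ of combined divisibility conditions exactly as before, and for each $M\in B_n$ introduce $a_j=\prod_i p_i^{e_{i,j}}$, $u_j=\prod_i p_i^{I_j(i)}$, and the substitution $v_j=n_j/a_j$. The single computation I must check is that the weights and the threshold transform cleanly under this substitution. Writing $b_j=x/a_j$, one has $v_j/b_j=n_j/x$, so the weight $\prod_{j\le k}(n_j/x)^r$ attached to each tuple in the definition of $P_x^{(2)}(n)$ is precisely $f\big((n_j/(x/a_j))_{j\le k}\big)$ for $f(\mathbf{s})=(s_1\cdots s_k)^r$; likewise $n_1\cdots n_k/x^k=s_1\cdots s_k$, so the constraint $n_1\cdots n_k>tx^k$ is exactly the constraint $s_1\cdots s_k>t$ defining the characteristic function $f$ of Lemma 2.1.

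With this identification in hand, the weighted sum over tuples satisfying $(*)$ is $Q_{K_k,f}^{\mathbf{u}}(\mathbf{a},x)$ for the appropriate $f$. For $r\ge 0$ the main term of Lemma 2.1 carries the factor $\int_{[0,1]^k}(s_1\cdots s_k)^r\,d\mathbf{s}=(r+1)^{-k}$, while for the characteristic function it carries $\int_{[0,1]^k,\,s_1\cdots s_k>t} d\mathbf{s}=1-\Omega_k(t)$. As in Lemma 4.1 one has $|S_{\mathbf{u},p_i}|=k-1$, so the Euler factor collapses to $\prod_{p|n}(1+(k-1)/p)^{-1}$. Summing over $M\in B_n$ and dividing by $x^k$, the main term becomes
$$T_k\prod_{p|n}\frac1{1+(k-1)/p}\sum_{M\in B_n}\frac1{a_1\cdots a_k}$$
multiplied by $(r+1)^{-k}$ (resp.\ by $1-\Omega_k(t)$). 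Because $B_n$ factors over the primes $p_i$ and $1/(a_1\cdots a_k)=\prod_i p_i^{-\sum_j e_{i,j}}$, this last sum factors as $\prod_{p^e||n}\sum_{x_1+\cdots+x_{k-1}=e}\frac1{p^e}\big(\frac{k-i_1(\mathbf{x})}{p^{x_{i_1}}}+\frac{i_1(\mathbf{x})}{p^{x_{i_1}+1}}\big)$, which is exactly the product appearing in (4); hence the main term is $(r+1)^{-k}p_k(n)$ (resp.\ $(1-\Omega_k(t))p_k(n)$). The error bookkeeping is identical to Lemma 4.1: $\theta(\mathbf{u})\le 2^{\omega(n)}$ for every $M$ and $|B_n|=\tau_{k-1}(n)k^{\omega(n)}$, so the accumulated $O(\theta(\mathbf{u})x^{k-1}\log^{k-1}x)$ errors sum to $O(\tau_{k-1}(n)(2k)^{\omega(n)}x^{k-1}\log^{k-1}x)$, giving the stated bound after division by $x^k$.

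For $-1<r<0$ the only new ingredient is the extra error term of Lemma 2.1, namely $O_r(x^{k-1+|r|}(a_1^{-|r|}a_2^{-1}\cdots a_k^{-1}+\cdots+a_1^{-1}\cdots a_{k-1}^{-1}a_k^{-|r|}))$; summing these over $M\in B_n$ produces $x^{k-1+|r|}q_k(n)$ by the very definition of $q_k(n)$, and dividing by $x^k$ yields the additional $O_r(q_k(n)x^{-(r+1)})$ since $|r|-1=-(r+1)$. I expect the main obstacle to be not any single hard estimate but the careful verification that summing the per-$M$ main terms over all of $B_n$ reproduces the arithmetic factor $p_k(n)$: this rests on (4) together with the prime-by-prime factorization of $B_n$, and it is the point at which one must confirm that the \emph{dropped maximum} versus \emph{second maximum} dichotomy of section 3 is matched exactly by the two families of tagged conditions counted here, so that the sum over conditions for each prime $p$ reassembles $\frac1{p^e}\big(\frac{k-i_1(\mathbf{x})}{p^{x_{i_1}}}+\frac{i_1(\mathbf{x})}{p^{x_{i_1}+1}}\big)$.
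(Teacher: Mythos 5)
Your proposal is correct and takes essentially the same route as the paper: the paper's own proof consists precisely of rerunning the proof of Lemma 4.1 with $Q_{K_k,1}^{\mathbf{u}}(\mathbf{a},x)$ replaced by $Q_{K_k,f}^{\mathbf{u}}(\mathbf{a},x)$ for the two choices of $f$ in Lemma 2.1, with the additional error term for $-1<r<0$ attributed to the extra error term of Lemma 2.1. Your write-up simply supplies the details the paper leaves implicit (the substitution check $v_j/(x/a_j)=n_j/x$, the prime-by-prime factorization of $\sum_{M\in B_n} 1/(a_1\cdots a_k)$ reassembling $p_k(n)$ via (4), and the error bookkeeping), all of which are accurate.
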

    \begin{proof}
    In the proof of Lemma 4.1, replace $Q_{K_k,1}^{\mathbf{u}}(\mathbf{a},x)$ by $Q_{K_k,f}^{\mathbf{u}}(\mathbf{a},x)$. We have
    $$f(\mathbf{s})=(s_1\cdots s_k)^r \textrm{ for }(6) \textrm{ and }$$
    $$f(\mathbf{s}) \textrm{ is the characteristic function of the set }\{(s_1,\ldots, s_k)\in [0,1]^k:s_1\cdots s_k> t\}$$
    for (7). Then apply Lemma 2.1. The additional error term in case $-1<r<0$ is due to the error term in Lemma 2.1.
    \end{proof}
    Recall that $\Omega_k(t)$, $0<t\leq 1$ is the volume of the set
    $$\{(s_1,\ldots, s_k)\in [0,1]^k:s_1\cdots s_k\leq t\}.$$
    This volume can be written as
    $$
    \Omega_k(t)=\int_0^t \frac{(-\log z)^{k-1}}{(k-1)!}dz = \int_{-\log t}^{\infty} \frac{u^{k-1}e^{-u}}{(k-1)!}du=\sum_{j<k}\frac{t(-\log t)^j}{j!}.
    $$
    The first identity is because $(-\log z)^{k-1}/(k-1)!$ is the probability density function of $\prod_{j\leq k} U_j$ where $(U_j)$ is a sequence of the independent uniform distribution on $(0,1)$. The second identity is obtained by the change of variable $-\log z=u$. The last identity is due to the relation between Erlang distribution and Poisson distribution.

    We study the Dirichlet series $\sum_{n=1}^{\infty} n^{-s} p_k(n)$. To prove Theorem 1.2 and 1.3, we need to study the behavior of the series at $s=-1$. In the following theorem, we prove results of Tauberian type. Let $g_k(n)=T_k^{-1}p_k(n)\prod_{p|n}(1+(k-1)/p)$.
    \begin{lemma}
    The Dirichlet series $F_k(s)=\sum_{n=1}^{\infty} n^{-s} p_k(n)$ is convergent if $\Re(s)>-1$. Its Euler product is
    $$
    F_k(s)=T_k \prod_{p\in\mathcal{P}}\left(1+\frac1{1+(k-1)/p}\sum_{e=1}^{\infty}\sum_{\substack{{x_1+\cdots +x_{k-1}=e} \\{x_i\geq 0}}} \frac1{p^{(s+1)e}} \left(\frac{k-i_1(\mathbf{x})}{p^{x_{i_1}}}+\frac{i_1(\mathbf{x})}{p^{x_{i_1}+1}}\right)\right).
    $$
    Similarly, we define $G_k(s)=\sum_{n=1}^{\infty} n^{-s} g_k(n)$. This series is convergent if $\Re(s)>-1$ with an Euler product
    $$
    G_k(s)=\prod_{p\in\mathcal{P}}\left(1+\sum_{e=1}^{\infty}\sum_{\substack{{x_1+\cdots +x_{k-1}=e} \\{x_i\geq 0}}} \frac1{p^{(s+1)e}} \left(\frac{k-i_1(\mathbf{x})}{p^{x_{i_1}}}+\frac{i_1(\mathbf{x})}{p^{x_{i_1}+1}}\right)\right).
    $$
    We have $x_{i_1}\geq 1$ and
    \begin{equation}
    \sum_{e=1}^{\infty}\sum_{\substack{{x_1+\cdots +x_{k-1}=e} \\{x_i\geq 0, \ x_{i_1}=1}}} (k-i_1(\mathbf{x}))=2^k-k-1.
    \end{equation}
    Consequently, for some constants $0<c_k<d_k$,
    \begin{equation}
    \sum_{n\leq x} np_k(n)\sim c_k \log^{2^k-k-1}x \textrm{ and }\sum_{n\leq x} ng_k(n)\sim d_k \log^{2^k-k-1}x.
    \end{equation}
    \end{lemma}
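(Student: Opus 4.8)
The plan is to dispatch the routine parts quickly and concentrate on (10). The convergence of $F_k$ and $G_k$ on $\Re(s)>-1$, together with the stated Euler products, is immediate from the multiplicativity recorded in (4): writing $w=s+1$, the deviation of each local factor from $1$ has leading term $\sum_{x_{i_1}=1}(k-i_1)p^{-(we+1)}$, so it is $O(p^{-(\Re(s)+2)})$, which is summable exactly when $\Re(s)>-1$, while the inner sum over $e$ converges geometrically. For the identity (9), I first note that $e\ge 1$ forces some $x_i>0$, hence $x_{i_1}\ge 1$. Imposing $x_{i_1}=1$ means the maximum entry is $1$, so $\mathbf{x}$ ranges over the nonzero vectors of $\{0,1\}^{k-1}$ and $i_1(\mathbf{x})$ is the position of the last coordinate equal to $1$. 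Grouping by $i_1=i$ (so $x_i=1$, $x_{i+1}=\cdots=x_{k-1}=0$, and $x_1,\dots,x_{i-1}$ free) gives $2^{i-1}$ vectors of weight $k-i$, whence the left side of (9) equals $\sum_{i=1}^{k-1}(k-i)2^{i-1}=2^k-k-1$, a routine evaluation.

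For (10) I would reduce to the behaviour of $F_k$ and $G_k$ at the edge $s=-1$ and then invoke a Tauberian theorem. The coefficients $np_k(n)$ are nonnegative with Dirichlet series $\sum_n np_k(n)n^{-s}=F_k(s-1)$, convergent for $\Re(s)>0$; likewise for $g_k$. I will show that, as $\sigma\to-1^+$ along the reals, $F_k(\sigma)\sim C_F(\sigma+1)^{-(2^k-k-1)}$ and $G_k(\sigma)\sim C_G(\sigma+1)^{-(2^k-k-1)}$ with $C_F,C_G>0$. Granting this, $\int_0^\infty B(y)e^{-sy}\,dy=F_k(s-1)/s\sim C_F\,s^{-(2^k-k)}$ as $s\to 0^+$, where $B(y)=\sum_{n\le e^y}np_k(n)$ is nondecreasing; Karamata's Tauberian theorem then gives $\sum_{n\le x}np_k(n)=B(\log x)\sim \frac{C_F}{\Gamma(2^k-k)}\log^{2^k-k-1}x$, and similarly for $g_k$. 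Only the real-variable asymptotic and monotonicity are used.

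The crux is the edge asymptotic, which I carry out for $G_k$, the case of $F_k$ being identical up to the bounded factor $T_k$ and the twist $(1+(k-1)/p)^{-1}=1+O(1/p)$. Let $1+L_p(w)$ be the $p$-th local factor. Here (9) enters decisively: a summand indexed by $(e,\mathbf{x})$ carries the power $p^{-(we+x_{i_1})}$ (with lower-order $p^{-(we+x_{i_1}+1)}$), which tends to $p^{-x_{i_1}}$ as $w\to 0^+$, so only the terms with $x_{i_1}=1$ can create a singularity in $w$, those with $x_{i_1}\ge 2$ giving exponents $\ge 2$ and bounded prime sums. Collecting the $x_{i_1}=1$ leading parts yields exactly $\sum_{e=1}^{k-1}c_e\,p^{-(1+we)}$, where $c_e=\sum_{x_1+\cdots+x_{k-1}=e,\ x_{i_1}=1}(k-i_1(\mathbf{x}))$ and $\sum_e c_e=2^k-k-1$ by (9). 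Accordingly I compare $G_k$ with $\prod_{e=1}^{k-1}\zeta(1+we)^{c_e}$, writing $\log G_k(\sigma)=\sum_{e=1}^{k-1}c_e\log\zeta(1+we)+R(w)$. The main obstacle is to show that $R(w)$ tends to a finite limit, and here is the delicate point: the naive termwise limit at $w=0$ is not the correct one, because the shifts $\log\zeta(1+we)\sim-\log w-\log e$ contribute finite $e$-dependent constants. The correct estimate is that the $p$-th term of $R(w)$ is $O(1/p^2)$ uniformly for $w$ in a right-neighbourhood of $0$, since $\log(1+L_p(w))=L_p(w)+O(1/p^2)$, $-c_e\log(1-p^{-(1+we)})=c_e p^{-(1+we)}+O(1/p^2)$, and $L_p(w)-\sum_e c_e p^{-(1+we)}$ is, uniformly in $w\ge 0$, a convergent combination of powers $p^{-(we+x_{i_1})}$ with exponent $\ge 2$. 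Dominated convergence then gives $R(w)\to R(0)$. Using $\zeta(1+we)\sim(we)^{-1}$, this produces $\log G_k(\sigma)=-(2^k-k-1)\log w-\sum_e c_e\log e+R(0)+o(1)$, hence $G_k(\sigma)\sim C_G(\sigma+1)^{-(2^k-k-1)}$ with $C_G=\exp\bigl(R(0)-\sum_{e=1}^{k-1}c_e\log e\bigr)>0$; the same argument gives $C_F>0$.

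Finally, the strict inequality $c_k<d_k$ needs no comparison of $C_F$ and $C_G$. Since $T_k<1$ for $k\ge 2$ and $\prod_{p\mid n}(1+(k-1)/p)\ge 1$, the definition $g_k(n)=T_k^{-1}p_k(n)\prod_{p\mid n}(1+(k-1)/p)$ gives $g_k(n)\ge T_k^{-1}p_k(n)$ for every $n$. Summing against $n$ yields $\sum_{n\le x}ng_k(n)\ge T_k^{-1}\sum_{n\le x}np_k(n)$, and comparing the two asymptotics from the previous step forces $d_k\ge T_k^{-1}c_k>c_k$, using $c_k>0$. Thus the whole argument rests on two places: the combinatorial evaluation (9), which fixes the exponent, and the dominated-convergence step for $R(w)$, which pins down the positive constant in the edge asymptotic.
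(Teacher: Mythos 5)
Your proposal is correct, and while it shares the paper's overall architecture---the combinatorial identity pins down the exponent $2^k-k-1$, and a Tauberian theorem converts an order-$(2^k-k-1)$ singularity at $s=-1$ into the $\log^{2^k-k-1}x$ asymptotic---the two steps that carry the analytic load are done differently. The paper factors out a single shifted zeta power, writing $H_k(s)=F_k(s)\zeta(s+2)^{-(2^k-k-1)}$ and $I_k(s)=G_k(s)\zeta(s+2)^{-(2^k-k-1)}$, and shows these cofactors have Dirichlet series convergent in the strictly larger half-plane $\Re(s)>-1-\tfrac{1}{2(k-1)}$; this simultaneously supplies the edge behavior and the regularity needed to quote Selberg--Delange or the Montgomery--Vaughan Tauberian theorem, and it yields the constants explicitly as $c_k=H_k(-1)/(2^k-k-1)!$ and $d_k=I_k(-1)/(2^k-k-1)!$. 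You instead stay entirely on the real axis: you compare $\log G_k$ with $\sum_e c_e\log\zeta(1+we)$, prove $R(w)\to R(0)$ via a uniform $O(p^{-2})$ bound and dominated convergence, and invoke Hardy--Littlewood--Karamata, which needs only monotonicity of the summatory function and the real edge asymptotic. This buys a more self-contained argument that never requires convergence beyond $\Re(s)=-1$, at the price of a less explicit constant $C_G=\exp\bigl(R(0)-\sum_e c_e\log e\bigr)$ (comparing with $\prod_e\zeta(1+we)^{c_e}$ rather than with $\zeta(s+2)^{2^k-k-1}$ is what introduces the $\sum_e c_e\log e$ correction). Your delicate steps do check out: in $L_p(w)-\sum_e c_e p^{-(1+we)}$ every surviving term has $p$-exponent at least $2$, the number of compositions of $e$ grows only polynomially while $x_{i_1}\ge e/(k-1)$ forces geometric decay in $e$, so the $O(p^{-2})$ domination is uniform in $w\ge 0$; and the twist $(1+(k-1)/p)^{-1}$ perturbs the local factor by $O(p^{-2})$ since $L_p(w)=O(1/p)$, so the $F_k$ case follows as claimed. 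Finally, for $0<c_k<d_k$ the paper compares the Euler products $H_k(-1)<I_k(-1)$ directly, whereas your coefficientwise inequality $g_k(n)\ge T_k^{-1}p_k(n)$ combined with the strict bound $T_k<1$ (each factor $(1-1/p)^{k-1}(1+(k-1)/p)<1$) is arguably cleaner, since it needs no expression for the limiting constants at all.
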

    \begin{proof}
    The Euler product of $F_k(s)$ is obtained in section 3 and that of $G_k(s)$ is clear from the definition of $g_k(n)$. It is clear that $x_{i_1}\geq 1$ and $i_1\leq k-1$. If $x_{i_1}=1$, then the sum (8) is restricted to a finite sum over $e\leq k-1$. For each $i_1\leq k-1$, we have $x_{i_1+1}=\cdots=x_{k-1}=0$ and $x_j\in \{0,1\}$ for $j\leq i_1-1$. Thus, the sum (8) is
    $$
    \sum_{i_1\leq k-1} 2^{i_1-1}(k-i_1)=2^{k-2}\cdot 1+2^{k-1}\cdot 2+\cdots+2^0\cdot (k-1)=2^k-k-1.
    $$
    Consequently, we see that
    $$
    H_k(s)=F_k(s)\zeta(s+2)^{-(2^k-k-1)} \textrm{ and } I_k(s)=G_k(s)\zeta(s+2)^{-(2^k-k-1)}
    $$
    both have convergent Dirichlet series if $\Re(s)>-1-\frac1{2(k-1)}$. By Selberg-Delange method~\cite[Section 5.5, Theorem 5]{Te} or Tauberian theorem~\cite[Theorem 5.11]{MV}, we have (9) for
    $$
    c_k=\frac{H_k(-1)}{(2^k-k-1)!} \textrm{ and } d_k=\frac{I_k(-1)}{(2^k-k-1)!}.
    $$
    We have
    $$
    H_k(-1)=T_k\prod_{p\in\mathcal{P}} \left(1+\frac1{1+(k-1)/p}\sum_{e=1}^{\infty}\sum_{\substack{{x_1+\cdots +x_{k-1}=e} \\{x_i\geq 0}}}  \left(\frac{k-i_1(\mathbf{x})}{p^{x_{i_1}}}+\frac{i_1(\mathbf{x})}{p^{x_{i_1}+1}}\right)\right)\left(1-\frac1p\right)^{2^k-k-1}
    $$
    and
    $$
    I_k(-1)=\prod_{p\in\mathcal{P}}\left(1+\sum_{e=1}^{\infty}\sum_{\substack{{x_1+\cdots +x_{k-1}=e} \\{x_i\geq 0}}}   \left(\frac{k-i_1(\mathbf{x})}{p^{x_{i_1}}}+\frac{i_1(\mathbf{x})}{p^{x_{i_1}+1}}\right)\right)\left(1-\frac1p\right)^{2^k-k-1}.
    $$
    Clearly, $H_k(-1)<I_k(-1)$ and
    $$
    H_k(-1)\geq T_k\prod_{p\in\mathcal{P}}\left(1+\frac{2^k-k-1}{p+k-1}\right)\left(1-\frac1p\right)^{2^k-k-1}.
    $$
    Therefore, $0<c_k<d_k$.
    \end{proof}
    The values of $n\in\N$ in $\frac{[X_1^{(x)}, \ldots, X_k^{(x)}]}{X_1^{(x)}\cdots X_k^{(x)}}=\frac1n$ are within $1\leq n\leq x^{k-1}$. Usually the lcm is large and $n$ is small, but the large values of $n$ around $x^{k-1}$ require careful control. These large values contribute to a large error term especially when $n>x$ and $-1\leq r\leq 1$. In this regard, we would like to have uniform upper bounds in the direction of Lemma 4.1 and 4.2. The upper bounds will be useful in the proof of Theorem 1.2 and 1.3. To this end, we modify the proof of Lemma 4.1.

    \begin{lemma}
    Let $k\geq 2$. Uniformly for $n\in\N$ and $x\geq 2$, we have
    \begin{equation}
    P_x^{(1)}(n)\leq g_k(n)
    \end{equation}
    \end{lemma}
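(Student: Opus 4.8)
The plan is to re-run the counting argument from the proof of Lemma 4.1, but to replace the asymptotic evaluation of the inner count (which is what produces the error term) by an exact upper bound obtained by discarding the coprimality constraints. Recall that the proof of Lemma 4.1 yields the exact disjoint decomposition
$$P_x^{(1)}(n)=\frac1{x^k}\sum_{M\in B_n}Q_{K_k,1}^{\mathbf{u}}(\mathbf{a},x),$$
where each combined divisibility condition $M\in B_n$ determines the integers $a_1,\ldots,a_k$ and the moduli $u_1,\ldots,u_k$ exactly as in that proof, and where $Q_{K_k,1}^{\mathbf{u}}(\mathbf{a},x)$ counts the tuples $(v_1,\ldots,v_k)\in\prod_{i\leq k}[1,x/a_i]$ that are pairwise coprime and satisfy $(u_j,v_j)=1$ for every $j$. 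This identity is exact, so no approximation has yet been made.

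First I would bound the inner count from above by simply throwing away every coprimality condition. Since $Q_{K_k,1}^{\mathbf{u}}(\mathbf{a},x)$ counts a subset of all lattice points of the box $\prod_{i\leq k}[1,x/a_i]$, we obtain
$$Q_{K_k,1}^{\mathbf{u}}(\mathbf{a},x)\leq\prod_{j\leq k}\left\lfloor\frac x{a_j}\right\rfloor\leq\frac{x^k}{a_1a_2\cdots a_k}.$$
Summing over $M\in B_n$ and cancelling the factor $x^k$ then gives $P_x^{(1)}(n)\leq\sum_{M\in B_n}(a_1a_2\cdots a_k)^{-1}$, so it only remains to identify this sum with $g_k(n)$.

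The key step is the exact evaluation of $\sum_{M\in B_n}(a_1\cdots a_k)^{-1}$. For a fixed $M$ and a prime $p$ with $p^e||n$, the $k$ exponents of $p$ appearing in $a_1,\ldots,a_k$ are the $k-1$ entries of $\mathbf{x}$ (which sum to $e$) together with the tagged exponent $T$, so $\sum_j e_{i,j}=e+T$ and the $p$-contribution to $a_1\cdots a_k$ is $p^{e+T}$. Here $T=x_{i_1}$ for the $k-i_1(\mathbf{x})$ rows of the table whose dropped position lies beyond $i_1$, and $T=x_{i_1}+1$ for the remaining $i_1(\mathbf{x})$ rows. Because $B_n$ is the Cartesian product over the primes $p\mid n$ of the local choices, the sum factors over these primes, and the local factor at $p^e||n$ is exactly
$$\sum_{\substack{x_1+\cdots+x_{k-1}=e\\x_i\geq 0}}\frac1{p^e}\left(\frac{k-i_1(\mathbf{x})}{p^{x_{i_1}}}+\frac{i_1(\mathbf{x})}{p^{x_{i_1}+1}}\right),$$
which is precisely the $p$-Euler factor of $g_k$. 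Hence $\sum_{M\in B_n}(a_1\cdots a_k)^{-1}=g_k(n)$, and the lemma follows.

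The argument is almost entirely bookkeeping; the only point requiring care is the identity $\sum_j e_{i,j}=e+T$ relating the product $a_1\cdots a_k$ to the tagged exponent, together with the observation that the same grouping of rows which produced the factored formula for $p_k(n)$ in Section 3 reproduces, after the cancellation of $T_k$ and the factors $1+(k-1)/p$, exactly the Euler factor of $g_k(n)$. The substantive gain over Lemma 4.1 is that discarding the pairwise-coprimality condition removes the error term entirely, at the cost of replacing $p_k(n)$ by the larger quantity $g_k(n)$.
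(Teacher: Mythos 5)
Your proof is correct and follows exactly the paper's argument: discard the pairwise-coprimality and $(u_j,v_j)=1$ conditions in the decomposition over $M\in B_n$ from Lemma 4.1, bound each count trivially by the lattice-point count $\prod_{i\leq k}\lfloor x/a_i\rfloor\leq x^k/(a_1\cdots a_k)$, and sum over $M\in B_n$. The only difference is that you spell out the bookkeeping identity $\sum_{M\in B_n}(a_1\cdots a_k)^{-1}=g_k(n)$, which the paper leaves implicit, and your verification of it is accurate.
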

    \begin{proof}
    In the proof of Lemma 4.1, we drop the coprimality conditions and trivially bound it by the number of tuples $(v_1,\ldots, v_k)$ in $\prod_{i\leq k} \left[1,\frac x{a_i}\right]$. That is
    $$
    \prod_{i\leq k} \left\lfloor\frac x{a_i}\right\rfloor  \leq \frac{x^k}{a_1\cdots a_k}.
    $$
    Then we sum over all $M\in B_n$ to obtain the result.
    \end{proof}
    We also have similar upper bounds for weighted sums.
    \begin{lemma}
    Uniformly for $x\geq 2$ and $n\in\N$, we have
    \begin{equation}P_x^{(2)}(n)\leq g_k(n) \textrm{ if }r\geq 0.
    \end{equation}
    On the other hand, if $-1<r<0$,
    \begin{equation}P_x^{(2)}(n)\leq \frac1{(r+1)^k} \ g_k(n).
    \end{equation}
    \end{lemma}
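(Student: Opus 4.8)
The plan is to mimic the proof of the preceding upper-bound lemma (Lemma 4.4), carrying the weight $\prod_{i\le k}(n_i/x)^r$ through the same decomposition. Writing each admissible tuple as $n_i=a_iv_i$ for $M\in B_n$ exactly as in the proof of Lemma 4.1, and setting $b_i=x/a_i$ so that $n_i/x=v_i/b_i$, I would first record
$$
P_x^{(2)}(n)=\frac1{x^k}\sum_{M\in B_n}\ \sum_{\substack{v_i\le b_i\ \forall i\\ (v_1,\ldots,v_k)\ \text{pairwise coprime}\\ (u_j,v_j)=1\ \forall j}}\ \prod_{i\le k}\left(\frac{v_i}{b_i}\right)^r.
$$
Since every weight $(v_i/b_i)^r$ is nonnegative, I would discard the coprimality and the $(u_j,v_j)=1$ constraints to obtain an upper bound, after which the inner sum factorizes as $\prod_{i\le k}\sum_{v\le b_i}(v/b_i)^r$.

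The crux is the one-dimensional estimate of $S(b):=\sum_{v\le b}(v/b)^r$, and here the sign of $r$ forces two different bounds. For $r\ge 0$ and $v\le b_i$ one has $(v/b_i)^r\le 1$, so $S(b_i)\le\lfloor b_i\rfloor\le b_i$, and multiplying over $i$ gives $\prod_i S(b_i)\le\prod_i b_i=x^k/(a_1\cdots a_k)$. For $-1<r<0$ the function $t^r$ is decreasing, so $v^r\le\int_{v-1}^{v}t^r\,dt$ and hence $\sum_{v=1}^{N}v^r\le\int_0^{N}t^r\,dt=N^{r+1}/(r+1)$, the integral converging at $0$ precisely because $r>-1$; taking $N=\lfloor b_i\rfloor\le b_i$ yields $S(b_i)\le b_i/(r+1)$ and therefore $\prod_i S(b_i)\le (r+1)^{-k}x^k/(a_1\cdots a_k)$.

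In both cases I would finish exactly as in Lemma 4.4: dividing by $x^k$ and summing over $M\in B_n$, the factor $x^k$ cancels and one is left with $\sum_{M\in B_n}(a_1\cdots a_k)^{-1}$, which equals $g_k(n)$ by the combinatorial identity already established there. This gives $P_x^{(2)}(n)\le g_k(n)$ for $r\ge 0$ and $P_x^{(2)}(n)\le(r+1)^{-k}g_k(n)$ for $-1<r<0$, as asserted.

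The only delicate point is the choice of per-coordinate bound. The clean estimate $S(b)\le b/(r+1)$ is available only when $t^r$ is decreasing, i.e. for $r<0$; for $r\ge 0$ it is genuinely false when $b$ is near $1$ (already $b=1$ gives $S(1)=1>1/(r+1)$), which is exactly why the two asserted bounds differ by the factor $(r+1)^{-k}$ and why the $r\ge 0$ case must settle for the coarser $S(b)\le b$. No arithmetic beyond these elementary comparisons is needed, since the identity $\sum_{M\in B_n}(a_1\cdots a_k)^{-1}=g_k(n)$ is inherited verbatim from the proof of Lemma 4.4.
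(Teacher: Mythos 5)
Your proposal is correct and takes essentially the same approach as the paper: drop the coprimality and $(u_j,v_j)=1$ constraints, factor the resulting sum over boxes, bound each one-dimensional factor by $b_i$ when $r\ge 0$ and by $b_i\int_0^1 s^r\,ds=b_i/(r+1)$ when $-1<r<0$ (the paper invokes exactly this monotonicity/integral comparison), then sum $1/(a_1\cdots a_k)$ over $M\in B_n$ to recover $g_k(n)$. The only difference is that you spell out the one-dimensional estimates and the failure of $S(b)\le b/(r+1)$ for $r\ge 0$ explicitly, which the paper leaves implicit.
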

    \begin{proof}
    In the proof of Lemma 4.1, we drop the coprimality conditions and trivially bound it by the weighted sum of tuples $(v_1,\ldots, v_k)$ in $\prod_{i\leq k} \left[1,\frac x{a_i}\right]$. If $r\geq 0$,
    $$
    \sum_{\forall {i\leq k}, v_i\leq x/a_i} \left(\frac{v_1}{x/a_1}\right)^r\cdots \left(\frac{v_k}{x/a_k}\right)^r\leq \prod_{i\leq k} \left\lfloor\frac x{a_i}\right\rfloor  \leq \frac{x^k}{a_1\cdots a_k}.
    $$
    In case $-1<r<0$, the function $s\mapsto s^r$ is decreasing, hence
    $$
    \sum_{\forall {i\leq k}, v_i\leq x/a_i} \left(\frac{v_1}{x/a_1}\right)^r\cdots \left(\frac{v_k}{x/a_k}\right)^r\leq \frac{x^k}{a_1\cdots a_k} \left(\int_0^1 s^r ds\right)^k=\frac{x^k}{a_1\cdots a_k}\frac1{(r+1)^k}.
    $$
    Then we sum over all $M\in B_n$ to obtain the result.
    \end{proof}

    \section{Distribution of lcm - proof of Theorem 1.1}
    We have the equivalence of events
    $$
    \frac{[X_1^{(x)}, \ldots, X_k^{(x)}]}{x^{k}}>t \ \Longleftrightarrow \ \exists {n\in\N}, \frac{[X_1^{(x)}, \ldots, X_k^{(x)}]}{X_1^{(x)}\cdots X_k^{(x)}}=\frac1n \textrm{ and } \frac{X_1^{(x)}\cdots X_k^{(x)}}{x^k}>nt.
    $$
    Since the event
    $$
    \frac{X_1^{(x)}\cdots X_k^{(x)}}{x^k}>nt
    $$
    is null whenever $nt>1$, we may write
    $$
    \prob\left(\frac{[X_1^{(x)}, \ldots, X_k^{(x)}]}{x^{k}}>t \right)=\sum_{n\leq \frac1t} \prob\left( \frac{[X_1^{(x)}, \ldots, X_k^{(x)}]}{X_1^{(x)}\cdots X_k^{(x)}}=\frac1n \textrm{ and } \frac{X_1^{(x)}\cdots X_k^{(x)}}{x^k}>nt\right).
    $$
    By Lemma 4.2 (7), the sum on the right-hand side is
    \begin{align*}
    \sum_{n\leq \frac1t}& \left((1-\Omega_k(nt))p_k(n)+O\left(\tau_{k-1}(n)(2k)^{\omega(n)}x^{-1}\log^{k-1} x\right)\right)\\
    &=\sum_{n\leq \frac1t}(1-\Omega_k(nt))p_k(n)+O_t(x^{-1}\log^{k-1}x)\\
    &=\sum_{n\leq \frac1t} \int_{nt}^1 \frac{(-\log z)^{k-1}}{(k-1)!}dz \cdot p_k(n)+O_t(x^{-1}\log^{k-1}x).
    \end{align*}
    This completes the proof of Theorem 1.1.
    \section{Moments of lcm - proof of Theorem 1.2}
    Let $1\leq n\leq x^{k-1}$ be an integer and $r>-1$ be a real number. we have
    $$
    \E\left(\frac{[X_1^{(x)},\ldots,X_k^{(x)}]^r}{(X_1^{(x)}\cdots X_k^{(x)})^r}\right)=\sum_{n\leq x^{k-1}} n^{-r} P_x^{(1)}(n) \textrm{ and }
    $$
    $$
    \E\left(\frac{[X_1^{(x)},\ldots,X_k^{(x)}]^r}{x^{kr}}\right)=\sum_{n\leq x^{k-1}}n^{-r}P_x^{(2)}(n).
    $$
    Suppose that $r>1$. By Lemma 4.3, 4.4, and 4.5, we have
    $$
    \sum_{x<n\leq x^{k-1}} n^{-r} P_x^{(j)}(n) \ll \sum_{x<n\leq x^{k-1}} n^{-r} g_k(n)\leq \frac1{x^{r+1}}\sum_{n\leq x^{k-1}}ng_k(n) \ll \frac{x^{\epsilon}}{x^{r+1}}\textrm{ for }j=1,2.
    $$
    Also, $\sum_{n=1}^{\infty} n^{-r}p_k(n)$ is convergent and
    $$
    \sum_{n>x} n^{-r} p_k(n) \ll x^{-(r+1)+\epsilon}.
    $$
    By Lemma 4.1, 4.2, and the convergence of $\sum_{n=1}^{\infty} n^{-r}\tau_{k-1}(n)(2k)^{\omega(n)}$, we have the result of Theorem 1.2 for $r>1$.

    Suppose that $-1<r\leq 1$. Again by Lemma 4.3, 4.4, and 4.5, we have
    $$
    \sum_{\sqrt x< n\leq x^{k-1}}n^{-r}P_x^{(j)}(n)\ll x^{-\frac{r+1}2} \sum_{n\leq x^{k-1}} n g_k(n) \ll x^{-\frac{r+1}2} \log^{2^k-k-1} x \textrm{ for }j=1,2.
    $$
    Also, $\sum_{n=1}^{\infty} n^{-r}p_k(n)$ is convergent and
    $$
    \sum_{n>\sqrt x} n^{-r}p_k(n) \ll x^{-\frac{r+1}2}\log^{2^k-k-1} x.
    $$
    The sum of the error terms of Lemma 4.1 and 4.2 over $n\leq \sqrt x$ is
    $$
    \sum_{n\leq \sqrt x}n^{-r}\tau_{k-1}(n)(2k)^{\omega(n)}x^{-1}\log^{k-1} x \ll x^{\frac{1-r}2-1}\log^{2k^2-2k-1+k-1} x = x^{-\frac{r+1}2}\log^{2k^2-k-2} x
    $$
    and the extra error term in case $-1<r<0$ contributes
    $$
    \sum_{n\leq \sqrt x} n^{-r} q_k(n) x^{-(r+1)}.
    $$
    If $-1<r<0$, the Dirichlet series $\sum n^{-s} q_k(n)$ absolutely convergent on $\Re s> r$. Thus, we have
    $$
    \sum_{n\leq \sqrt x} n^{-r} q_k(n) x^{-(r+1)}\ll \sum_{n\leq \sqrt x} n^{-r} \frac{x^{\frac{\epsilon}2}}{n^{\epsilon}} q_k(n) x^{-(r+1)}\ll x^{-(r+1)+\epsilon}
    $$
    which is $\ll x^{-\frac{r+1}2} \log^{2^k-k-1} x$.
    Therefore, we have the result of Theorem 1.2 with the error $x^{-\frac{r+1}2}\log^{\max(2^k-k-1, 2k^2-k-2)}x$.
    \section{Inverse moment of lcm - proof of Theorem 1.3}
    We begin with
    $$
    \E\left(\frac{X_1^{(x)}\cdots X_k^{(x)}}{[X_1^{(x)},\ldots,X_k^{(x)}]}\right)=\frac1{x^k}\sum_{n_1,\ldots, n_k \leq x}\frac{n_1\cdots n_k}{[n_1,\ldots, n_k]}=\sum_{n\leq x^{k-1}} n P_x^{(1)}(n).
    $$
    For the lower bound, we add the terms up to $x^{1/2-\dt}$ for $\dt>0$. Then
    $$
    \sum_{n\leq x^{1/2-\dt}}nP_x^{(1)}(n)=\sum_{n\leq x^{1/2-\dt}} np_k(n) + O\left(x^{-1}\log^{k-1}x\sum_{n\leq x^{1/2-\dt}}n\tau_{k-1}(n)(2k)^{\omega(n)}\right).
    $$
    By Lemma 4.3,
    $$
    \sum_{n\leq x^{1/2-\dt}}np_k(n)\sim c_k \log^{2^k-k-1}(x^{1/2-\dt})=c_k \left(\frac12-\dt\right)^{2^k-k-1}\log^{2^k-k-1}x.
    $$
    The error term is $O(x^{-2\dt+\epsilon})$ for any $\epsilon<2\dt$ and it is negligible. Thus, letting $\dt\rightarrow 0$, we obtain
    $$
    \liminf_{x\rightarrow\infty}\frac1{x^k\log^{2^k-k-1}x}\sum_{n_1,\ldots, n_k \leq x}\frac{n_1\cdots n_k}{[n_1,\ldots, n_k]}\geq \frac{c_k}{2^{2^k-k-1}}.
    $$
    For the upper bound, we apply Lemma 4.4. Then we have
    $$
    \sum_{n\leq x^{k-1}}nP_x^{(1)}(n)\leq \sum_{n\leq x^{k-1}} n g_k(n).
    $$
    Thus, by Lemma 4.3, we have
    $$
    \sum_{n\leq x^{k-1}}ng_k(n)\sim d_k \log^{2^k-k-1}(x^{k-1})=d_k (k-1)^{2^k-k-1} \log^{2^k-k-1}x.
    $$
    Hence, we obtain
    $$
    \limsup_{x\rightarrow\infty}\frac1{x^k\log^{2^k-k-1}x}\sum_{n_1,\ldots, n_k \leq x}\frac{n_1\cdots n_k}{[n_1,\ldots, n_k]}\leq d_k (k-1)^{2^k-k-1}.
    $$
    \textbf{Acknowledgment.}

    The author thanks the referee for pointing out a mistake in the proof of Lemma 2.1 from the initial submission of this paper. This led to the author to realize the presence of the extra error term in case $-1<r<0$.

       \flushleft


\begin{thebibliography}{99}
    \bibitem[BMR]{BMR} A. Bostan, A. Marynych, K. Raschel, \emph{On the least common multiple of several random integers}, J. Number Theory, {\bf 204} (2019), pp. 113-133.
    \bibitem[DE]{DE} P. Diaconis, P. Erd\H{o}s, \emph{On the distribution of the greatest common divisor}, A Festschrift for Herman Rubin Inst. of Math. Stat. Lecture Notes - Monograph Series, {\bf 45} (2004), pp. 56-61.
    \bibitem[D]{D} H. Delange, \emph{On the use of the method of moments for the study of additive function}, J. Number Theory, {\bf 39} (1991), pp. 144-161.
    \bibitem[EST]{EST} D. Essouabri, C. Salinas Zavala, L. T\'{o}th, \emph{Mean values of multivariable multiplicative functions and applications to the average number of cyclic subgroups and multivariable averages associated with the LCM function}, J. Number Theory, in press, available at \url{https://www.sciencedirect.com/science/article/pii/S0022314X2100281X}
    \bibitem[FF1]{FF1} J. Fern\'{a}ndez, P. Fern\'{a}ndez, \emph{On the probability distribution of the gcd and lcm of $r$-tuples of integers}, available at \url{https://arxiv.org/abs/1305.0536}
    \bibitem[FF2]{FF2} J. Fern\'{a}ndez, P. Fern\'{a}ndez, \emph{Equidistribution and coprimality}, available at \url{https://arxiv.org/abs/1310.3802}
    \bibitem[FF3]{FF3} J. Fern\'{a}ndez, P. Fern\'{a}ndez, \emph{Divisibility properties of random sample of integers}, RASCAM {\bf 115} 26(2021).
    \bibitem[H1]{H1} J. Hu, \emph{The probability that random positive integers are $k$-wise relatively prime}, Int. J. Number Theory, {\bf 9}. 5(2013), pp. 1263-1271.
    \bibitem[H2]{H2} J. Hu, \emph{Pairwise relative primality of positive integers}, available at \url{https://arxiv.org/abs/1406.3113}
    \bibitem[HT]{HT} T. Hilberdink, L. T\'{o}th, \emph{On the average value of the least common multiple of $k$ positive integers}, J. Number Theory, {\bf 169} (2016), pp. 327-341.
    \bibitem[HLT]{HLT} T. Hilberdink, F. Luca, L. T\'{o}th, \emph{On certain sums concerning the gcd's and lcm's of $k$ positive integers}, Int. J. Number Theory, {\bf 16}. no. 1(2020), pp. 77-90.
    \bibitem[K]{K} S. Kim, \emph{On the reciprocal sum of lcm of $k$-tuples}, Res. Number Theory {\bf 8} 44(2022).
    \bibitem[MV]{MV} H. Montgomery, R. Vaughan, \emph{Multiplicative Number Theory I. Classical Theory}, Cambridge University Press 2007.
    \bibitem[RH]{RH} J. Reyna, R. Heyman, \emph{Counting tuples restricted by pairwise coprimality conditions}, J. Integer. Seq, {\bf 18} (2015), Article 15.10.4.
    \bibitem[Te]{Te} G. Tenenbaum, \emph{Introduction to analytic and probabilistic number theory}, Cabridge University Press 1995.
    \bibitem[To]{T} L. T\'{o}th, \emph{The probability that $k$ positive integers are pairwise relatively prime}, Fibonacci Quart. {\bf 40} (2002), pp. 13-18.

    \end{thebibliography}
\end{document}